\documentclass[11pt]{article}
\usepackage{geometry} 
\geometry{letterpaper}  
\usepackage{amsmath,amsthm,amssymb}                
\usepackage{graphicx, color}
\usepackage{amssymb}
\usepackage{epstopdf}
\usepackage{pdfsync}
\usepackage{mathabx}

\newtheorem{definition}{Definition}[section]

\newtheorem{theorem}[definition]{Theorem}
\newtheorem{proposition}[definition]{Proposition}

\newtheorem{remark}[definition]{Remark}
\newtheorem{example}[definition]{Example}

\numberwithin{equation}{section}

\def\e{\varepsilon}

\newcommand{\LM}[2]{\hbox{\vrule width.4pt \vbox to#1pt{\vfill
\hrule width#2pt height.4pt}}}
\newcommand{\LLL}{{\mathchoice {\>\LM{7}{5}\>}{\>\LM{7}{5}\>}{\,\LM{5}{3.5}\,}{\,\LM{3.35}{2.5}\,}}}


\title{Ising systems, measures on the sphere, and zonoids\\
}
\author{Andrea Braides
\\
SISSA
\\
via Bonomea 265\\
34146 Trieste, Italy\\
{email: abraides@sissa.it}
 \and  Antonin Chambolle
\\
 CEREMADE\\
CNRS \& Universit\'e de Paris-Dauphine
\\
   Place de Lattre de Tassigny
\\
  75775 Paris,
France\\
{email: antonin.chambolle@ceremade.dauphine.fr}
}
\date{}
\begin{document} 

\maketitle

\begin{abstract} We give an interpretation of a class of discrete-to-continuum results for Ising systems using the theory of zonoids. We define the classes of {\em rational zonotopes and zonoids}, as those of the Wulff shapes of perimeters obtained as limits of finite-range homogeneous Ising systems and of general homogeneous Ising systems, respectively. Thanks to the characterization of zonoids in terms of measures on the sphere, rational zonotopes, identified as finite sums of Dirac masses, are dense in the class of all zonoids. Moreover, we show that a rational zonoid can be obtained from a coercive Ising system if and only if the corresponding measure satisfies some `connectedness' properties, while it is always a continuum limit of `discrete Wulff shapes' under the only condition that the support of the measure spans the whole space. Finally, we highlight the connection with the homogenization of periodic Ising systems and propose a generalized definition of rational zonotope of order $N$, which coincides with the definition of rational zonotope if $N=1$.

%
%
\end{abstract}

\section{Introduction}
We consider Ising systems; that is, energies depending on a {\em spin parameter}, formally written as
\begin{equation}\label{Ising-1}
-\sum_{i\neq j} \alpha_{i-j} u_i u_j.
\end{equation} 
In this notation the spin functions $u\colon \Omega\cap \mathbb Z^d \to \{-1,1\}$ is defined on the portion of the standard cubic lattice contained in the (bounded) open set $\Omega$, and we write $u_i$ in the place of $u(i)$.  
The systems are supposed to be {\em ferromagnetic}; that is, $\alpha_k\ge 0$ for all $k\in \mathbb Z^d$. This condition implies that the interactions between nodes at distance $k$ such that $\alpha_k>0$ are minimal if $u_i=u_j$. This condition in turn implies that the only {\em ground states} are the constant states, provided that the network of interaction is {\em connected}, in the sense that for every $j\in\mathbb Z^d\setminus\{0\}$ there exist $K\in\mathbb N$ and $k_1,\ldots, k_K$ such that $k=k_1+\cdots+k_K$ and $\alpha_{k_\ell}>0$ for all $\ell\in\{1,\ldots, K\}$, and also $\Omega$ is likewise connected. While the form \eqref{Ising-1} is quite descriptive, it is convenient for our purposes to consider an equivalent energy of the form
\begin{equation}\label{Ising-2}
\sum_{i\neq j} \alpha_{i-j} (u_i -u_j)^2.
\end{equation}
Indeed, since $u_j^2=u_i^2=1$ developing the square, the terms in \eqref{Ising-2} can be rewritten as
$$
\sum_{i\neq j} \alpha_{i-j} (u_i -u_j)^2= 2\sum_{i\neq j} \alpha_{i-j} -2\sum_{i\neq j}\alpha_{i-j}u_iu_j,
$$
and $2\sum_{i\neq j} \alpha_{i-j}$ is a constant depending only on $\Omega$. In the form \eqref{Ising-2}, ground states have always zero energy and we can also consider $\Omega$  unbounded since we  thus avoid annoying $+\infty-\infty$ indeterminate forms. In this paper $\Omega$ plays no role, and we take $\Omega=\mathbb R^d$ for simplicity. Furthermore, the lattice $\mathbb Z^d$ can be substituted with a Bravais lattice, at the expense only of a heavier notation in some proofs.

The overall behaviour of systems \eqref{Ising-2} can be described by introducing an {\em effective surface tension}, which takes the form
\begin{equation}\label{Ising-3}
\varphi(\nu) =  4\sum_{k\in\mathbb Z^d} \alpha_{k} |\langle k,\nu\rangle|,
\end{equation}
which describes the energy density of a minimal interface macroscopically oriented as an hypersurface with normal $\nu$. This effective surface tension can be obtained in various ways, which describe different ways of looking at the problem. One way is to compute the average limit behaviour of minimum problems on large cubes with two faces orthogonal to $\nu$ and boundary conditions jumping in correspondence of the mid-plane of the cube; a more complete analysis is obtained by looking at minimizers of problems in the whole space with the volume constraint $\#\{u_i=1\}= N$, and prove that, upon suitably scaling and translating  them, they converge (after suitable interpolation) to minimizers of the perimeter energy related to $\varphi$. A relatively recent way to explain this convergence is by a {\em discrete-to-continuum} approach, as a result of a limit analysis for the scaled energies
\begin{equation}\label{Ising-4}
E_\e(u)=\sum_{i\neq j}\e^{d-1} \alpha_{i-j} (u_i -u_j)^2,
\end{equation}
where the scaled {\em spin parameter} $u\colon \Omega\cap\e \mathbb Z^d \to \{-1,1\}$ is now defined on the scaled standard cubic lattice, and we write $u_i$ in the place of $u(\e i)$. Each function $u$ is extended as a piecewise-constant function, so that the domain of each $E_\e$ can be identified as a subset of $L^1_{\rm loc}(\Omega;\{-1,1\})$,
and, if the lattice $\mathbb Z^d$ is connected with respect to $\{\alpha_k\}$ in the sense above, the family $E_\e$ is equicoercive with respect to the strong convergence in $L^1_{\rm loc}(\Omega;\{-1,1\})$, so that a family $u^\e$  with equibounded energy converges, up to subsequences, to a continuum parameter $u$ with $u(x)\in\{-1,1\}$ almost everywhere. If we write $u= 2\chi_A-1$, this defines a discrete-to-continuum convergence of spin functions $u^\e$ to sets $A$, which are indeed sets of finite perimeter. With respect to this convergence, the $\Gamma$-limit of the functionals $E_\e$ is the (anisotropic) perimeter functional
\begin{equation}\label{Ising-5}
F(A)=\int_{\Omega\cap\partial^*A}\varphi(\nu)d{\mathcal H}^{d-1},
\end{equation}
where $\partial^*A$ and $\nu_A$ are the {\em reduced boundary} and the measure-theoretical {\em internal normal} to $\partial^*A$, respectively, and $\varphi$ is defined by \eqref{Ising-3}. In this form, the result has been proved in various versions, first in the context of free-discontinuity problems by Chambolle \cite{Ch-2} and by Braides and Gelli \cite{BGe}, whose proof is then reset in terms of Ising systems in \cite[Section 3.1]{ABCS} (for a simplified exposition in a two-dimensional context see also Section 3.2.4 in the book by Braides and Solci \cite{GeomFlow}). This analysis can be seen as a homogenization problem with a $1$-periodic system of interactions \cite{BP} and as such $\varphi$ can be defined via an asymptotic homogenization formula, reducing to the limit analysis of minimum problems on cubes as described above. Conversely, the convergence of minimum problems with volume constraints to a {\em Wulff problem} for the perimeter $F$ is a consequence of the property of convergence of minima of $\Gamma$-convergence.

Scope of this paper is to connect these variational descriptions of Ising systems with
the concept of {\em zonoid} from Convex Geometry. 
 A zonoid is defined as a limit in the Hausdorff metric of {\em zonotopes}, which are simply defined to be vector sums of a finite number of line segments. As such, their support functions can be
written in the form
\begin{equation}\label{sufu}
f(z)=\sum_{j=1}^N m_j|\langle \nu_j,z\rangle|,
\end{equation}
where $\nu_i\in S^{d-1}$ and $m_j>0$. Comparing \eqref{sufu} with \eqref{Ising-3} we note that the latter requires some restrictions on $\nu_j$. With this observation in mind, we then define the subclass of {\em rational zonotopes} as those for which all $\nu_j$ in \eqref{sufu} are rational directions; i.e., $\nu_j={k_j\over\|k_j\|}$ for some $k_j\in\mathbb Z^d\setminus\{0\}$. Hence, an effective surface tension $\varphi$ in \eqref{Ising-3}  for a system $\{\alpha_k\}$ with $\alpha>0$ only for a finite set of $k\in\mathbb Z^d$ can be interpreted as the support function of a rational zonotope. Note that the family of rational zonotopes is still a dense class in the family of zonoids. 

The fundamental property for the analysis of zonoids is that they can be identified with (symmetric) positive bounded  measures $\mu$ on $S^{d-1}$ such that the support function $f$ of the zonoid can be written as
 \begin{equation}\label{sufu-measure}
f(z)=\int_{S^{d-1}} |\langle \nu,z\rangle|\,d\mu(\nu).
\end{equation}
In the case of zonotopes this measure is a finite sum of Dirac deltas, which, in case of rational zonotopes, are concentrated on a set of rational directions. We then define the class of  {\em rational zonoids} as that of the zonoids corresponding to possibly infinite sums of Dirac deltas concentrated on rational directions. This is the class corresponding to general $\varphi$ in \eqref{Ising-3}.

\smallskip
We then have the following characterizations.

{\em Exact reachability}. The functionals $F$ obtained as limits of Ising systems are all functionals whose Wulff shapes are rational zonoids.

{\em Approximate reachability}. For each zonoid there exists a family $F_n$ of functionals obtained as limits of Ising systems such that the corresponding Wulff shapes converge to the zonoid.

{\em Convergence of discrete Wulff shapes.} If a rational zonoid has positive Lebesgue measure then there exists an Ising system with constrained minimizers, suitably identified with sets, that approximate the zonoid.
For lower-dimensional rational zonoids the same holds in a lower-dimensional subspace.

{\em Coercive Ising systems.} We highlight a connectedness property of the generating measure of a rational zonoid which is necessary and sufficient for the existence of a corresponding coercive Ising system.

\smallskip

We further note that we do not have uniqueness of generating Ising systems, in the sense that the same rational zonoid corresponds to infinitely many equivalent Ising systems, for some of which we may not have the property of convergence of discrete minimizers. 

The concept of zonoid has been generalized in various ways (see \cite[Chapter 9]{Sch}). The variational interpretation of rational zonoids allows to view them as a particular case of homogenization of periodic Ising systems when the period is $1$. With this observation in mind, we finally propose a generalization of rational zonotopes and zonoids as those obtained by homogenization of periodic Ising systems. If such a system of period $N$ is of finite range, the Wulff shape of the corresponding perimeter is a polytope, due to the results by Chambolle and Kreutz \cite{CK}, which allows to define rational zonotopes of order $N$.
The closure of all zonotopes of order $N$ with varying $N$ is proved to be the set of all convex centered sets using the results of Braides and Kreutz \cite{BK}.

\section{Zonotopes, zonoids and their support functions}

In the following two sections we recall some definitions and properties from the theory of zonoids, for which we refer to the monograph by Schneider \cite{Sch}. In Section \ref{ratzo} we introduce the subclass of rational zonoids.

\subsection{Zonoids}
A (centered) {\em zonotope} in $\mathbb R^d$ is a polytope that is obtained as a Minkowski sum of a finite number of centered segments $[-w_i,w_i]$ with $w_i\in \mathbb R^d$, $i\in\{1,\ldots,N\}$, and $N\in\mathbb N$; that is, a set of the form
\begin{equation}\label{zonoidW}
W=\Bigl\{w\in\mathbb R^d\colon  \hbox{ there exist } s_i\in[-1,1]\hbox{ such that } w=\sum_{i=1}^N s_iw_i\Bigr\}.
\end{equation}
The usual definition of zonoid (see \cite{Sch}) does not require that the segments be centered. However, any (general) zonoid is the translation of a centered zonoid. Since we will mainly deal with symmetric sets in $\mathbb R^d$ we directly use centered zonoids in order to simplify the notation and terminology.

We say that $W$ is {\em non-degenerate} if the vectors $w_1,\ldots, w_N$ span the whole $\mathbb R^d$ so that $W$ is a convex set symmetric with respect to the origin and of non zero Lebesgue measure. If otherwise, a degenerate zonotope can be identified as a non-degenerate zonotope in a lower-dimensional space. 

It is worth noting that zonotopes are particular centered symmetric polytopes characterized by the fact that their faces are themselves (congruent to $d-1$-dimensional) zonotopes. This property rules out a number of polytopes in dimension $d\ge 3$; e.g.~octahedrons.

Using \eqref{zonoidW}, the {\em support function} of a zonotope is then given by
$$
f_W(z)=\sup\{\langle z,w\rangle: w\in W\}= \sum_{i=1}^N |\langle z,w_i\rangle|.
$$
Conversely, given $f$ of this form, the set $W$ in \eqref{zonoidW} coincides with the {\em Wulff shape} of $f$, given by
\begin{equation}\label{WuSh}
W_f=\Big\{w \in\mathbb R^d: \langle z,w\rangle\le 1\hbox{ for all } z \in\mathbb R^d\hbox{ such that } f(z)\le 1\Big\}.
\end{equation}

The family of (centered) {\em zonoids} in $\mathbb R^d$ is the family of all convex symmetric sets that can be obtained as limits of zonotopes in the Hausdorff metric. We say that a zonoid is non-degenerate if it has a non empty interior, in which case it is the limit of non-degenerate zonotopes. Note that in dimension $d=2$ all convex symmetric sets are zonoids, while the symmetry restrictions on the faces of zonotopes imply that zonoids are nowhere dense in the family of  all convex symmetric sets if $d\ge 3$.

\subsection{Generating measures and support functions of zonoids}
For a zonotope $W$ as in \eqref{zonoidW}, after setting $\nu_i={w_i\over\|w_i\|}$, we can write
\begin{eqnarray*}
f_W(z)&=&\sum_{i=1}^N |\langle z,w_i\rangle|\\
&=& \sum_{i=1}^N |\langle z,\nu_i\rangle|\,{\|w_i\|}=\int_{S^{d-1}}|\langle z,\nu\rangle|\,{\|w_i\|}\,d\Bigl({\delta_{\nu_i}+\delta_{-\nu_i}\over 2}\Bigr)(\nu)
\\
&=&\int_{S^{d-1}}|\langle z,\nu\rangle| d\mu_W(\nu),
\end{eqnarray*}
where
$$
\mu_W=\sum_{i=1}^N{\|w_i\|\over 2}\bigl(\delta_{\nu_i}+\delta_{-\nu_i}\bigr).
$$ 
Conversely, given a positive measure of the form $\mu=\sum_{i=1}^N{\lambda_i}(\delta_{\nu_i}+\delta_{-\nu_i})$, 
with $\nu_i\in S^{d-1}$, setting
$$
f_\mu(z)=\int_{S^{d-1}}|\langle z,\nu\rangle|\,d\mu(\nu)=\sum_{i=1}^N 2\lambda_i |\langle z,\nu_i\rangle|,
$$
we have that $f_\mu= f_W$, where $W$ is given by \eqref{zonoidW} with $w_i=2\lambda_i\nu_i$. Hence, zonotopes correspond to (symmetric) linear combinations of Dirac deltas on $S^{d-1}$ with positive coefficients.
Note that the Hausdorff convergence of zonoids corresponds to the weak$^*$ convergence of the related measures. By the weak$^*$ density of sums of Dirac deltas this shows that positive symmetric measures on $S^{d-1}$ are in bijection with zonoids.

The support functions of (centered) zonoids in $\mathbb R^d$ are characterized by elements of the cone of positive symmetric measures on $S^{d-1}$ as in the following proposition.

\begin{proposition}
For every (centered) zonoid $W$ in $\mathbb R^d$ there exists a unique symmetric positive measure $\mu_W$ on $S^{d-1}$ such that the support function $f_W$ can be written as 
\begin{equation}
f_W(z)=\int_{S^{d-1}}|\langle z,\nu\rangle|\,d\mu_W(\nu).
\end{equation}
\end{proposition}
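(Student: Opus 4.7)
My plan is to split the argument into existence and uniqueness, with the harder part being uniqueness via injectivity of the cosine transform.

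For existence, I would use the definition of zonoid: write $W$ as the Hausdorff limit of a sequence of zonotopes $W_n$, each of which already carries (by the preceding discussion) a symmetric atomic measure $\mu_{W_n}$ representing its support function $f_{W_n}$. The first task is to obtain a uniform bound on the total masses $\mu_{W_n}(S^{d-1})$ so that Banach--Alaoglu yields a weakly$^*$ convergent subsequence in $C(S^{d-1})^*$. Since $W_n\to W$ in the Hausdorff metric, the support functions $f_{W_n}$ converge uniformly on the unit ball, so in particular $f_{W_n}(e_j)$ is bounded for each canonical basis vector $e_j$. The bound then follows from
\[
\sum_{j=1}^d f_{W_n}(e_j)=\int_{S^{d-1}}\Bigl(\sum_{j=1}^d|\langle e_j,\nu\rangle|\Bigr)d\mu_{W_n}(\nu)\ge \mu_{W_n}(S^{d-1}),
\]
using the elementary inequality $\sum_j|\nu_j|\ge(\sum_j\nu_j^2)^{1/2}=1$ valid on $S^{d-1}$.

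Extracting a subsequence with $\mu_{W_n}\weakstar\mu_W$ for some positive finite Borel measure $\mu_W$ on $S^{d-1}$, I would check that $\mu_W$ is symmetric (the antipodal pushforward is weak$^*$ continuous, so symmetry passes to the limit) and that the integral representation holds: for each fixed $z\in\R^d$ the function $\nu\mapsto|\langle z,\nu\rangle|$ is continuous on $S^{d-1}$, whence
\[
f_W(z)=\lim_n f_{W_n}(z)=\lim_n\int_{S^{d-1}}|\langle z,\nu\rangle|d\mu_{W_n}(\nu)=\int_{S^{d-1}}|\langle z,\nu\rangle|d\mu_W(\nu).
\]

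For uniqueness, if two symmetric positive measures $\mu,\mu'$ both produce $f_W$ through the integral representation, then $\sigma=\mu-\mu'$ is a symmetric signed measure annihilated by the \emph{cosine transform} $\sigma\mapsto\bigl(z\mapsto\int_{S^{d-1}}|\langle z,\nu\rangle|d\sigma(\nu)\bigr)$. The classical fact to invoke is that, expanded in spherical harmonics, this transform acts as multiplication by an explicit nonzero scalar on each even-degree harmonic eigenspace (odd-degree harmonics being automatically annihilated by symmetric measures), and is therefore injective on the space of symmetric signed measures; I would simply quote this from Schneider's monograph \cite{Sch}. I expect this uniqueness step to be the main obstacle: the existence part reduces to routine compactness, whereas the injectivity of the cosine transform on even measures is a genuine harmonic-analytic result on $S^{d-1}$.
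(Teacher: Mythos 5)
Your proof is correct. Note that the paper itself gives no proof of this proposition: it is recalled as a classical fact from the theory of zonoids with reference to Schneider's monograph \cite{Sch}, and your argument is exactly the standard one found there --- existence via the total-mass bound $\mu_{W_n}(S^{d-1})\le\sum_j f_{W_n}(e_j)$ and weak$^*$ compactness applied to the generating measures of approximating zonotopes, uniqueness via injectivity of the cosine transform on even signed measures, which it is entirely reasonable to quote from \cite{Sch} since that harmonic-analytic step is the genuinely nontrivial ingredient.
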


Such a measure is called the {\em generating measure} of $W$.

\section{Ising systems and a variational interpretation of rational zonotopes and zonoids}

%
%
%
%
%

We consider homogeneous systems of discrete interactions governed by energies of the form
\begin{equation}\label{fegen}
E_\e(u)=\sum_{i,j\in\mathbb Z^d}\e^{d-1} \,\alpha_{i-j}(u_i-u_j)^2,
\end{equation}
defined on functions $u\colon \e\mathbb Z^d\to \{-1,1\}$, where we use the notation $u_i=u(\e i)$.
Note that we can assume, and we will, that $\alpha_{-k}=\alpha_{k}$ since otherwise we can replace both coefficients by $\frac{\alpha_{k}+\alpha_{-k}}{2}$, and this change does not influence the value of the energy. 
We assume that the system is {\em ferromagnetic}; that is, $\alpha_{k}\geq 0$ for any $k\in \mathbb Z^d$.
We further assume the {\em decay condition}
\begin{equation}\label{decay}
\sum_{k\in\mathbb Z^d} \alpha_k\|k\|<+\infty. 
\end{equation}
This condition is necessary to have non-trivial energies, in the sense that if this condition fails then the limit of $E_\e$ as defined below is finite only if $u$ is identically $1$ or $-1$. The convex function $\varphi\colon \mathbb R^d\to[0,+\infty)$
\begin{equation}\label{fi}
\varphi(z)= 4\sum_{k\in\mathbb Z^d} \alpha_k|\langle z,k\rangle|,
\end{equation}
is well defined and finite thanks to \eqref{decay}.

\subsection{Rational zonoids}\label{ratzo}
The particular form of the functions $\varphi$ in \eqref{fi} suggests a definition of a class of zonoids, of which such types of functions are support functions.

\begin{definition}
We say that $\nu\in S^{d-1}$ is a {\em rational direction} if there exist $w\in\mathbb Z^d\setminus \{0\}$ such that
$$
\nu={w\over\|w\|}.
$$
A set $W$ is a {\em (centered) rational zonotope} if its generating measure is of the form
\begin{equation}
\mu_W=\sum_{i=1}^N \lambda_i (\delta_{\nu_i}+\delta_{-\nu_i}),
\end{equation}
where $N\in\mathbb N$, $\nu_i$ are rational directions and $\lambda_i>0$. A set is a {\em (centered) rational zonoid} if there exists a sequence $\{\nu_i\}$ of rational directions and a summable sequence $\{\lambda_i\}$ of positive numbers such that
\begin{equation}
\mu_W=\sum_{i=1}^{+\infty} \lambda_i (\delta_{\nu_i}+\delta_{-\nu_i}).
\end{equation}
\end{definition}

\begin{remark}\rm
By the density of rational directions in $S^{d-1}$, rational zonotopes (and hence also rational zonoids) are dense in the class of zonoids.\end{remark}

\subsection{Sets of finite perimeter and their energies}
To each Ising system we will associate a {\em perimeter energy}. To that end we recall that a subset $A$ in $\mathbb R^d$ is a {\em set of finite perimeter} if the distributional gradient of its characteristic function $\chi_A$ is a bounded measure. We refer to \cite{AFP,B-LN,Maggi} for an introduction to the topic. Here we only recall that if $A$ is set of finite perimeter there exists a Borel set $\partial^*A$, the {\em reduced boundary of $A$}, and a function $\nu=\nu_A\colon \partial^*A\to S^{d-1}$, the {\em inner normal to $A$}, such that $D\chi_A=\nu \mathcal H^{d-1}\LLL \partial^*A$. Furthermore if $\varphi\colon \mathbb R^d\to[0,+\infty)$ is a convex function positively homogeneous of degree one, then the {\em perimeter energy} $F=F_\varphi$ defined by
\begin{equation}\label{perimeterF}
F_\varphi(A)=\int_{\partial^*A}\varphi(\nu(x))d\mathcal H^{d-1}(x)
\end{equation}
is weakly lower semicontinuous with respect to the convergence of $\chi_{A_\e}$ to $\chi_A$ in $L^1_{\rm loc}(\mathbb R^d)$ on families such that the total variations $\|D\chi_{A_\e}\|$ are equibounded.

We finally recall that families of sets of finite perimeter such that $\|D\chi_{A_\e}\|$ are equibounded are precompact with respect to the convergence $\chi_{A_\e}\to \chi_A$ in $L^1_{\rm loc}(\mathbb R^d)$, and that if $A$ is a set of finite perimeter then there exists a family of polyhedral sets $A_\e$ converging in the sense above to $A$  as $\e\to 0$, and such that $F_\varphi(A_\e)$ tends to $F_\varphi(A)$. Note that for polyhedral sets we have $\partial^*A=\partial A$, up to an $\mathcal H^{d-1}$-negligible set.

\subsection{Convergence of the scaled energies of an Ising system}

We say that a sequence $u^\e\colon \e\mathbb Z^d\to \{-1,1\}$ {\em converges to a set of finite perimeter} $A$ if the piecewise-constant interpolations $u_\e$ of $u^\e$ on $\e\mathbb Z^d$, defined by $u_\e(x)=u^\e_i\ (=u^\e(\e i))$ if $x\in \e i+[0,\e)^d$, locally converge in $L^1(\mathbb R^d)$ to the function $u=2\chi_A-1$ and there exists $C$ such that $\|Du_\e\|=\|Du_\e\|(\mathbb R^d)\le C$, where $\|Du_\e\|$ denotes the total variation of the measure $Du_\e$. In other words, the sequence $u_\e$ converges weakly in
$BV_{\rm loc}(\mathbb R^d)$.

\bigskip
The condition that $\|Du_\e\|(\mathbb R^d)\le C$ is a consequence of the boundedness of the energies $E_\e(u^\e)$ if there holds a condition of the type
\begin{equation}\label{coercnn}
\hbox{$\alpha_{k}\geq c>0$ if $\|k\|=1$ ({\em coerciveness of nearest-neighbour interactions}).}
\end{equation}
\noindent In this case, we have that there exists $A$ such that $u^\e\to A$ up to subsequences. A thorough description of the limit of Ising systems in terms of perimeter functionals when condition \eqref{coercnn} is satisfied is given in \cite[Section 3.1]{ABCS}.

\smallskip
A general necessary and sufficient condition for coerciveness will be given below.
Note however that we do not make any such assumption in our definition of convergence of $E_\e$, in order to include also degenerate cases in our treatment.

\begin{theorem}[limits of homogeneous Ising systems as rational zonoids]\label{teo-hom-sys} 
\index{limits of homogeneous systems} A functional of the form \eqref{perimeterF}
is a $\Gamma$-limit  with respect to the convergence $u^\e\to A$ of energies $E_\e$ of the form \eqref{fegen} for some Ising system $\{\alpha_k\}$ with $\alpha_k\ge 0$ satisfying \eqref{decay}
if and only if the Wulff shape of $\varphi$ is a rational zonoid. Furthermore, if the range of $\{\alpha_k\}$ is finite the Wulff shape of $\varphi$ is a rational zonotope.
\end{theorem}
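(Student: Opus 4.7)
The proof splits into two directions. The forward direction amounts to showing that, for any ferromagnetic system $\{\alpha_k\}$ satisfying \eqref{decay}, the functionals $E_\e$ $\Gamma$-converge to $F_\varphi$ with $\varphi$ as in \eqref{fi}, and that the Wulff shape of $\varphi$ is a rational zonoid (a rational zonotope if the range is finite). The converse requires, given an arbitrary rational zonoid, to build coefficients $\{\alpha_k\}$ whose surface tension $\varphi$ has that zonoid as Wulff shape, and then to invoke the forward direction.

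\textbf{The $\Gamma$-limit.} Decompose
\[
E_\e(u) \;=\; \sum_{k\in\mathbb Z^d}\alpha_k\, E_\e^k(u),\qquad E_\e^k(u)=\sum_{i\in\mathbb Z^d}\e^{d-1}(u_i-u_{i+k})^2.
\]
For each fixed $k$, a one-dimensional slicing argument along discrete lines of direction $k$ (as in \cite{Ch-2, BGe, ABCS}) yields that $E_\e^k$ $\Gamma$-converges, on sequences $u^\e\to A$ in the sense defined above, to $4\int_{\partial^*A}|\langle\nu,k\rangle|\,d\mathcal H^{d-1}$. The liminf inequality for $E_\e$ then follows by Fatou's lemma applied to the non-negative sum over $k$. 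For the limsup, by density it suffices to treat polyhedral $A$; a sharp-interface recovery sequence on $\e\mathbb Z^d$ gives the exact value up to an error of the form $\sum_{\|k\|>R}\alpha_k\|k\|\,\mathcal H^{d-1}(\partial A)$, which vanishes as $R\to\infty$ by \eqref{decay}. This establishes $E_\e\xrightarrow{\Gamma}F_\varphi$.

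\textbf{From $\varphi$ to a rational zonoid.} Setting
\[
\mu_\varphi \;=\; 4\sum_{k\in\mathbb Z^d\setminus\{0\}} \alpha_k\,\|k\|\;\delta_{k/\|k\|},
\]
one checks that $\mu_\varphi$ is positive, symmetric (because $\alpha_{-k}=\alpha_k$), finite (because $\sum\alpha_k\|k\|<\infty$), supported on rational directions, and satisfies $\int_{S^{d-1}}|\langle z,\nu\rangle|\,d\mu_\varphi(\nu)=\varphi(z)$. By the proposition of Section~2.2, the Wulff shape of $\varphi$ is precisely the zonoid with generating measure $\mu_\varphi$; since $\mu_\varphi$ is a countable (finite, when $\{\alpha_k\}$ has finite range) combination of Dirac masses at rational directions, this is a rational zonoid (resp.\ a rational zonotope).

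\textbf{Converse and main obstacle.} Conversely, let $W$ be a rational zonoid with generating measure $\mu_W=\sum_i\lambda_i(\delta_{\nu_i}+\delta_{-\nu_i})$ and $\nu_i=k_i/\|k_i\|$, $k_i\in\mathbb Z^d\setminus\{0\}$ (collecting terms if two $\nu_i$ coincide). Define
\[
\alpha_{k_i}=\alpha_{-k_i}=\frac{\lambda_i}{4\,\|k_i\|},\qquad \alpha_k=0\ \text{otherwise}.
\]
Then $\alpha_k\ge 0$, $\alpha_{-k}=\alpha_k$, and $\sum_k\alpha_k\|k\|=\tfrac{1}{2}\sum_i\lambda_i<\infty$, so \eqref{decay} holds; a direct computation shows $\varphi(z)=4\sum_k\alpha_k|\langle z,k\rangle|=\int_{S^{d-1}}|\langle z,\nu\rangle|\,d\mu_W(\nu)=f_W(z)$, so the Wulff shape of $\varphi$ is $W$. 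Applying the forward direction yields $E_\e\xrightarrow{\Gamma}F_\varphi$. The finite-range case is identical, with a finite sum. The main technical obstacle is the $\Gamma$-convergence itself under the sole assumption \eqref{decay}, with no coerciveness: the recovery sequence must handle arbitrarily long-range interactions simultaneously, and the tail must be absorbed using only first-moment summability; this is why the slicing decomposition combined with polyhedral density is the right tool, rather than a direct blow-up argument.
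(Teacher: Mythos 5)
Your proposal is correct and follows essentially the same route as the paper: a per-direction decomposition of $E_\e$ with a slicing-type lower bound (the paper implements this explicitly via interpolation on the sublattice generated by $\{e_1,\ldots,e_{d-1},k\}$ and counting the $k_d$ equivalence classes, where you invoke the literature and Fatou), polyhedral recovery sequences with the tail controlled by \eqref{decay}, the identification of $\varphi$ with the measure $4\sum_k\alpha_k\|k\|\delta_{k/\|k\|}$ on rational directions, and the same explicit choice $\alpha_{\pm k_i}=\lambda_i/(4\|k_i\|)$ in the converse. No gaps of substance.
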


\begin{proof}
Given $\{\alpha_k\}$ non-negative coefficients satisfying \eqref{decay} with respect to the convergence $u^\e\to A$ , the $\Gamma$-limit of the sequence of energies $E_\e$ defined by is given by an energy \eqref{perimeterF} with $\varphi$ given by \eqref{fi}. We briefly give a proof. This can also be achieved with a perturbation argument from the analog result for coercive functionals, for which we refer e.g.~to Section 3.1 in \cite{ABCS}, where the interested reader can find further common details with the proof presented below. 

In order to provide a lower bound, we examine separately energies with only the contribution for a fixed $k\in\mathbb Z^d\setminus\{0\}$. We can suppose that the last component $k_d$ be strictly positive, and define the lattice $\mathcal L=\mathcal L_k$ as the Bravais lattice generated by $\{e_1,\ldots, e_{d-1}, k\}$, which is a sub-lattice of $\mathbb Z^d$. 
We consider the functionals
$$
E^k_\e(u)=\sum_{i\in \mathcal L}\e^{d-1}(u_{i+k}-u_i)^2.
$$
These can be seen as a system of nearest-neighbour interactions on the lattice $\mathcal L$ with $\alpha_{e_j}=0$ for $j\in\{1,\ldots, d-1\}$.

Let $u^\e\to A$ with $\sup_\e \|Du_\e\|<+\infty$.
For every $u^\e$ we can define its interpolation $u^{\mathcal L}_\e$ on the lattice $\e\mathcal L$ defined by $u^{\mathcal L}_\e(x)= u^\e_i$ if $x\in \e i+\e U$, where $U$ is the $d$-dimensional parallelogram with sides $e_1,\ldots, e_{d-1}, k$ and $i\in\mathcal L$. Note that $\sup_\e \|Du^{\mathcal L}_\e\|<+\infty$, so that we can suppose that $u^{\mathcal L}_\e\to 2\chi_{A^{\mathcal L}}-1$ in $L^1_{\rm loc}(\mathbb R^d)$, for some set of finite perimeter $A^{\mathcal L}$. Since $u_\e(x)=u^{\mathcal L}_\e(x)$ on $\e \mathcal L +\e (U\cap [0,1)^d)$ we then deduce that $A^{\mathcal L}=A$. If we define $A^{\mathcal L}_\e=\{ x\in \mathbb R^d: u^{\mathcal L}_\e(x)=1\}$ then we have 
$$
E^k_\e(u^\e)= {\|k\|\over k_d}4\int_{\partial A^{\mathcal L}_\e}\Big|\Big\langle \nu,{k\over \|k\|}\Big\rangle\Big| d{\mathcal H}^{d-1}
= {1\over k_d}4\int_{\partial A^{\mathcal L}_\e}|\langle \nu,k\rangle| d{\mathcal H}^{d-1},
$$
where we have taken into account that the parts of the boundary of $A^{\mathcal L}_\e$ with normal $\nu\in\{e_1,\ldots, e_{d-1}\}$ do not contribute to the energy, and the projection of $U$ on the hyperplane orthogonal to $k$ has $d-1$-measure equal to ${k_d\over \|k\|}$. Note that in this case $E^k_\e(u^\e)$ equals the total variation $\|D_ku^{\mathcal L}_\e\|$, where $D_k$ denotes the distributional directional derivative in the direction $k$.

Taking into account the lower semicontinuity of this perimeter functional, we then have
$$
\liminf_{\e\to 0}E^k_\e(u^\e)\ge {1\over k_d}4\int_{\partial^*A}|\langle \nu,k\rangle| d{\mathcal H}^{d-1}.
$$
Since $|U|=k_d$ the number of equivalence classes of $\mathbb Z^d$ modulo $\mathcal L$ is $k_d$, from which (proceeding as above in each of these equivalence classes) we have
$$
\liminf_{\e\to 0} \sum_{i\in \mathbb Z^d}\e^{d-1}(u_{i+k}-u_i)^2\ge 
k_d\liminf_{\e\to 0} E^k_\e(u^\e)\ge 4\int_{\partial^*A}|\langle \nu,k\rangle| d{\mathcal H}^{d-1}.
$$
From these inequalities, valid for all $k\in\mathbb Z^d$, the inequality $\liminf\limits_{\e\to 0} E_\e(u^\e)\ge F(A)$ follows.
To prove the upper bound it suffices to note that if $A$ is a polyhedron then the restriction of $u^\e=2\chi_A-1$ to $\e\mathbb Z^d$ is a recovery sequence satisfying $\|Du_\e\|\le C<+\infty$. The proof of the $\Gamma$-convergence is then completed by an approximation argument.

Since the function $\varphi$ is the (locally uniform) limit of the functions
$$
\varphi_n(z)= 4\sum_{k\in\mathbb Z^d,\ \|k\|\le n}\alpha_k |\langle k,z \rangle|,
$$
whose Wulff shapes are rational zonotopes corresponding to the measures on $S^{d-1}$
$$
\mu_n=\sum_{k\in\mathbb Z^d,\ \|k\|\le n}4\alpha_k\|k\|\delta_{ {k\over\|k\|}},
$$
$\varphi$ is the support function of a rational zonoid corresponding to 
$$
\mu=\sum_{k\in\mathbb Z^d}4\alpha_k\|k\|\delta_{ {k\over\|k\|}}.
$$
All these measures are symmetric since we assume $\alpha_k=\alpha_{-k}$.
Note that if $\{\alpha_k\}$ is of finite range then $\varphi$ is the support function of a rational zonotope.

Conversely, if we have a rational zonoid $W$ corresponding to a finite symmetric positive measure 
$$
\mu_W=\sum_{i}\beta_i \big(\delta_{ \nu_i}+\delta_{ -\nu_i}\big),
$$
with $\nu_i\in S^{d-1}$ rational directions. 
Note that 
$$
\int_{S^{d-1}}|\langle z,\nu\rangle|d(\delta_{ \nu_i}+\delta_{ -\nu_i}\big)(\nu)=2|\langle z,\nu_i\rangle|, 
$$
so that 
$$
f_W(z)=\int_{S^{d-1}}|\langle z,\nu\rangle|d\mu_W(\nu)=2\sum_{i}\beta_i|\langle z,\nu_i\rangle|. 
$$
Then for all $i$ we can fix $k_i\in \mathbb Z^d$ such that $\nu_i={k_i\over \|k_i\|}$ and define
$$\alpha_k=\begin{cases}{\beta_i\over 4\|k_i\|}&\hbox{if $k=k_i$ or $k=-k_i$ for some $i$}\\
0 &\hbox{otherwise,}\end{cases}
$$
so that we have
$$
\varphi(z)= 4\sum_{k\in\mathbb Z^d}\alpha_k |\langle k,z\rangle|=
2\sum_{i}\beta_i \Big|\Big\langle z,{k_i\over \|k_i\|}\Big\rangle\Big|,
$$
and $\sum_k\alpha_k\|k\|={1\over 2}\sum_i\beta_i<+\infty$, so that \eqref{decay} is satisfied.
\end{proof}

\begin{definition}
We say that an Ising system $\{\alpha_k\}$ {\em generates a rational zonoid $W$}, or equivalently it {\em generates a measure} $\mu_W$ (the generating measure of $W$), or equivalently it {\em generates an energy density} $f_W$, the support function of $W$, if we have that $E_\e$ $\Gamma$-converges to $F$ in the sense of Theorem {\rm \ref{teo-hom-sys}} with $\varphi=f_W$. We say that two Ising systems as above are {\em equivalent} if they generate the same zonoid.
\end{definition}

\begin{remark}[equivalent Ising systems]\label{no-inj}\rm
For every rational direction $\nu\in S^{d-1}$ let ${\mathcal I}(\nu)=\big\{k\in\mathbb Z^d\setminus \{0\}: {k\over\|k\|}=\nu\big\}$. 
We can rewrite formula \eqref{fi} as
\begin{equation}\label{fi-1}
\varphi(z)= 4\sum_{\nu}\sum_{k\in {\mathcal I}(\nu)} \alpha_k|\langle z,k\rangle|= 4\sum_{\nu}\Big(\sum_{k\in {\mathcal I}(\nu)} \alpha_k\|k\|\Big)|\langle z,\nu\rangle|.
\end{equation}
From \eqref{fi-1} and taking the symmetry of $\alpha_k$ into account we note that two Ising systems satisfying \eqref{decay} generate the same rational zonoid if and only if for every rational direction $\nu\in S^{d-1}$ we have
\begin{equation}
\sum_{k\in {\mathcal I}(\nu)}\alpha_k\|k\|=\sum_{k\in {\mathcal I}(\nu)}\alpha'_{k}\|k\|.
\end{equation}

As an example, we may take the systems (parameterized on sequences $\{\lambda_n\}$)
\begin{equation}\label{alfamult}
\alpha_k=\begin{cases} 
\lambda_{|n|} &\hbox{ if }k=n e_\ell, n\in\mathbb Z, \ \ell\in\{1\ldots, d\}\\
0 &\hbox{ otherwise,}
\end{cases}
\end{equation}
with $\sum_{n=1}^\infty n\lambda_n=\lambda$. Then $\varphi(z)=4\lambda\sum_{n=1}^d|z_n|=:4\|z\|_1$, and the corresponding $W$ is the same coordinate square depending only on $\lambda$ and not on the particular sequence. 

Let $\mu$ be a measure generated by the system $\{\alpha_k\}$. Note that if $\nu\in S^{d-1}$ is such that $\mu(\{\nu\})>0$ then the set of indices $k\in {\mathcal I}(\nu)$ such that $\alpha_k>0$ may be infinite even though $\{\alpha_k\}$ generates a rational zonotope.
Conversely, for all rational zonoid $W$ there exists an Ising system $\{\alpha_k\}$ generating $W$ such that for all $\alpha_k>0$ $k\neq 0$ such that $\mu_W\big({k\over\|k\|}\big)>0$. Indeed, it suffices to note that, if $\alpha(\nu)=\sum_{k\in \mathcal I(\nu)}\alpha_k\|k\|$, then an equivalent Ising system is $\{\alpha'_k\}$ given by
$\alpha'_k=2^{-n}{\alpha(\nu)\over n\|k_0(\nu)\|}$ if $k\in \mathcal I(\nu)$ and $k=nk_0(\nu)$, where $k_0$ is the element of least norm in $\mathcal I(\nu)$.
\end{remark}

 The following definition generalizes condition \eqref{coercnn}.

\begin{definition}\label{coer-def} 
We say that an Ising system $\{\alpha_k\}$ is a {\em coercive system} if there exists a constant $M>0$ such that 
\begin{equation}\label{coerFun}
\|Du\|\le M E(u)\ \hbox{, where }\quad E(u)=\sum_{i,j\in\mathbb Z^d} \alpha_{i-j} (u_i-u_j)^2,
\end{equation}
where in the left-hand side we have identified $u\colon \mathbb Z^d\to\{-1,1\}$ with its piecewise-constant extension from $\mathbb Z^d$.
\end{definition}

\begin{remark}\rm
Note that in \eqref{coerFun}  
\begin{equation}
\|Du\|= 4\mathcal H^{d-1}(\partial\{x: u(x)=1\})= 2\#\{ (i,j): u_i\neq u_j\},
\end{equation}
the factor $4$ coming from the fact that $(u_i-u_j)^2=4$ if $u_i\neq u_j$, and the factor $2$ coming from counting both $(i,j)$ and $(j,i)$ if $u_i\neq u_j$. From the last equality we have that \eqref{coercnn} implies that $\{\alpha_k\}$ is coercive. 

From the definition of $E_\e$ we obtain that \eqref{coerFun} is equivalent to $\|Du_\e\|\le M E_\e(u^\e)$  with $M$ independent of $\e$, so that
 if $\sup_\e E_\e(u^\e)<+\infty$ then also $\sup_\e \|Du_\e\|<+\infty$ and then, up to subsequences, $u^\e\to A$ for some set of finite perimeter $A$.
\end{remark}

\begin{remark}[equivalent coercive and non-coercive systems]\label{co-si} \rm
In the assumptions of Theorem \ref{teo-hom-sys}, in general the sequence $E_\e$ is not coercive; that is, we
cannot deduce that there exists $A$ such that $u^\e\to A$ up to subsequences from the boundedness of the energies $E_\e(u^\e)$. In the case that \eqref{coercnn} holds we have a subclass of $\varphi$, for which $\alpha_k>0$ if $k\in\{e_1\ldots, e_d\}$, and the construction in the proof of the theorem gives coercive approximating $E_\e$. Note however that from the form of $\varphi$ we cannot deduce the equicoerciveness of $E_\e$. Indeed, let $\varphi(z)=4\|z\|_1$, for which we may take (see \eqref{alfamult} in Remark \ref{no-inj})
\begin{equation}
\alpha_k=\begin{cases} 
{1\over 2} &\hbox{ if }k=2 e_\ell, \ \ell\in\{1\ldots, d\}\\
0 &\hbox{ otherwise;}
\end{cases}
\end{equation}
that is, the only non-zero interactions are those at distance $2$. The corresponding energies are not coercive. Indeed, they have additional ground states, e.g., those given by the checkerboard functions $v$ and $-v$, where
$v_i=(-1)^{\|i\|_1}$. The interpolations $v_\e$ of the corresponding scaled functions $v^\e$ do not converge strongly locally in $L^1(\mathbb R^d)$.
\end{remark}

\begin{example}[non-exact reachability of rational zonotopes by coercive systems]\rm
Let $d=2$ and let $\varphi(z)=8(|\langle e_1+e_2,z\rangle|+|\langle e_1-e_2,z\rangle|)$, corresponding to $\alpha_k=1$ for $k\in\{e_1+ e_2, e_1-e_2, -e_1+e_2, -e_1-e_2\}$, and $\alpha_k=0$ elsewhere, which is not coercive, again with ground states $v$ and $-v$ as in Remark \ref{co-si}.
\end{example}

We give a definition of connectedness related to an Ising system $\{\alpha_k\}$.

\begin{definition} We say that $i$ and $j\in\mathbb Z^d$ are {\em connected with respect to $\{\alpha_k\}$}, or that $i$ and $j$ are {\em $\{\alpha_k\}$-connected}, if there exist $N\in\mathbb N$ and $\{k_1,\ldots, k_N\}$ such that $\sum_{\ell=1}^N k_\ell= j-i$, and $\alpha_{k_\ell}>0$ for all $\ell\in\{1,\ldots, N\}$. We say that the system $\{\alpha_k\}$ is {\em connected} if all $i$ and $j\in\mathbb Z^d$ are connected with respect to $\{\alpha_k\}$.
\end{definition}

\begin{remark}\rm Note that the following statements are equivalent

\ (i) $\{\alpha_k\}$ is connected;

\ \!(ii) $0$ and $j$ are connected for all $j\in\mathbb Z^d$;

(iii) $0$ and $e_n$ are connected for all $n\in \{1,\ldots, d\}$. 

The only non-trivial implication is that (iii) implies (ii). This is proved e.g.~by induction on $n=\|j\|_1$. If $n=1$ the two statements are the same. If $\|j\|_1=n>1$ then we can write $j=j'+e_m$ for some $j'$ with $\|j'\|_1=n-1$ and some $m$ . By the inductive hypothesis there exist $N\in\mathbb N$ and $\{k_1,\ldots, k_N\}$ such that $\sum_{\ell=1}^N k_\ell= j'$, and $\alpha_{k_\ell}>0$ for all $\ell\in\{1,\ldots, N\}$, and there exist $N_m\in\mathbb N$ and $\{k^m_1,\ldots, k^m_{N_m}\}$ such that $\sum_{\ell=1}^{N_m} k^m_\ell= e_m$, and $\alpha_{k^m_\ell}>0$. Then the claim is proven by writing $j=\sum_{\ell=1}^N k_\ell+\sum_{\ell=1}^{N_m} k^m_\ell$.
\end{remark}

We now give a necessary and sufficient condition for a rational zonoid to be obtained from a coercive Ising system.

\begin{theorem}\label{coerns}
Let $\mu$ be a symmetric positive measure on $S^{d-1}$ generating a rational zonoid. Then there exists a coercive Ising system $\{\alpha_k\}$ generating $\mu$ if and only if the set $\{k\in \mathbb Z^d: \mu\big({k\over \|k\|}\big)>0\}$ 
spans the whole $\mathbb Z^d$ on $\mathbb Z$.
\end{theorem}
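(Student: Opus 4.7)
The plan is to use the inclusion $S(\alpha):=\{k\neq 0:\alpha_k>0\}\subseteq D(\mu):=\{k\neq 0:\mu(\{k/\|k\|\})>0\}$, valid for any Ising system $\{\alpha_k\}$ generating $\mu$ thanks to the identification $\mu=\sum_k 4\alpha_k\|k\|\,\delta_{k/\|k\|}$ recorded in the proof of Theorem \ref{teo-hom-sys}; both sets are symmetric under $k\mapsto -k$. The two implications then reduce to: (i) coerciveness of $\{\alpha_k\}$ forces $S(\alpha)$ to $\mathbb Z$-span $\mathbb Z^d$, hence so does $D(\mu)$; and (ii) conversely, if $D(\mu)$ spans, one can exhibit a generating system whose support $\mathbb Z$-spans $\mathbb Z^d$, and every symmetric $\mathbb Z$-spanning system is coercive.

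The necessity is essentially by inspection: if $S(\alpha)\subseteq\Lambda\subsetneq\mathbb Z^d$, pick a coordinate vector $e_\ell\notin\Lambda$ (some $e_\ell$ must lie outside $\Lambda$, else $\Lambda=\mathbb Z^d$) and define $u_i=\sigma([i])$ for the two-coloring $\sigma\colon\mathbb Z^d/\Lambda\to\{-1,1\}$ with $\sigma(\Lambda)=1$, $\sigma(e_\ell+\Lambda)=-1$, extended arbitrarily. Then $u_{i+k}=u_i$ for every $k\in S(\alpha)\subseteq\Lambda$, so $E(u)=0$, while $u_i\neq u_{i+e_\ell}$ for every $i\in\Lambda$, so $\|Du\|=+\infty$, contradicting \eqref{coerFun}.

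For the sufficiency, let $k_0(\nu)\in\mathcal I(\nu)$ denote the integer of least norm in the ray of each atom $\nu$ of $\mu$, and set $\alpha_{\pm k_0(\nu)}=\mu(\{\nu\})/(4\|k_0(\nu)\|)$ and $\alpha_k=0$ otherwise (a specialization of Remark \ref{no-inj}). This symmetric system satisfies \eqref{decay} because $\sum_\nu\mu(\{\nu\})\leq\mu(S^{d-1})<+\infty$, generates $\mu$ by a direct check, and has support whose $\mathbb Z$-span coincides with that of $D(\mu)$, hence spans $\mathbb Z^d$. Extract a finite $\mathbb Z$-spanning subset $K\subseteq S(\alpha)$; since $K=-K$, for each coordinate vector $e_\ell$ one may write $e_\ell=k^\ell_1+\cdots+k^\ell_{M_\ell}$ with $k^\ell_m\in K$. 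Set $\delta=\min_{\ell,m}\alpha_{k^\ell_m}>0$ and $D=\max_k\#\{(\ell,m):k^\ell_m=k\}<+\infty$. For any $u\colon\mathbb Z^d\to\{-1,1\}$ and any $i$ with $u_i\neq u_{i+e_\ell}$, the telescoping along $i\to i+k^\ell_1\to\cdots\to i+e_\ell$ produces some $m$ with $u_j\neq u_{j+k^\ell_m}$ for $j=i+k^\ell_1+\cdots+k^\ell_{m-1}$; injectivity of $i\mapsto j$ at fixed $m$ gives
\begin{equation*}
\#\{i:u_i\neq u_{i+e_\ell}\}\leq\sum_{m=1}^{M_\ell}\#\{j:u_j\neq u_{j+k^\ell_m}\}.
\end{equation*}
Summing over $\ell$ and using $\|Du\|=4\sum_\ell\#\{i:u_i\neq u_{i+e_\ell}\}$ together with $E(u)=4\sum_k\alpha_k\#\{j:u_j\neq u_{j+k}\}$, the bounds $\alpha_{k^\ell_m}\geq\delta$ and $D$ yield $\|Du\|\leq (D/\delta)E(u)$, which is \eqref{coerFun}.

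The main obstacle, and the only delicate step, is this final coerciveness estimate: the $\mathbb Z$-span hypothesis on $D(\mu)$ is a priori an infinite condition, while \eqref{coerFun} requires a single uniform constant. The argument therefore hinges on extracting a \emph{finite} $\mathbb Z$-spanning subset $K\subseteq S(\alpha)$ so that $\delta=\min_{k\in K}\alpha_k>0$ is well-defined, and on using the symmetry $S(\alpha)=-S(\alpha)$ to turn signed $\mathbb Z$-combinations into the positive sums required by the telescoping. Once these reductions are in place, the remainder of the proof is a routine telescoping plus counting argument.
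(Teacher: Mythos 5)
Your proposal is correct and follows essentially the same route as the paper's proof: necessity by exhibiting a non-constant configuration with $E(u)=0$ that is constant on the cosets of the subgroup generated by the support (the paper colors the $\{\alpha_k\}$-connected component of $0$ instead of a two-coloring of $\mathbb Z^d/\Lambda$), and sufficiency by passing to a generating system whose support $\mathbb Z$-spans $\mathbb Z^d$ and bounding nearest-neighbour jumps by the energy via chains expressing each $e_\ell$ as a finite sum of support vectors. The remaining differences are cosmetic: you use the minimal system supported on $\pm k_0(\nu)$ rather than the paper's augmented equivalent system, and your injectivity/multiplicity constant $D/\delta$ is a slightly cleaner version of the paper's counting bound.
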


\begin{proof}
Let $\{\alpha_k\}$ be an Ising system generating $\mu$. Note that we may assume that $\alpha_k>0$ for all points in $k\in \mathbb Z^d$ such that $\mu\big({k\over \|k\|}\big)>0$ since this assumption does not influence $\mu$ and increases the connectedness of $\{\alpha_k\}$. We then have
$$
\Big\{k\in \mathbb Z^d: \mu\Big({k\over \|k\|}\Big)>0\Big\}=\{k\in \mathbb Z^d: \alpha_k>0\},
$$ 
and note that the span of this set is just the set of all finite sums of points $k_\ell$ with $\alpha_{k_\ell}>0$; that is, the set of all points $\{\alpha_k\}$-connected with $0$.

If this set is not the whole $\mathbb Z^d$, then we consider the function defined by
$$
u_i=\begin{cases}
1 & \hbox{ if $i$ is $\{\alpha_k\}$-connected with $0$}\cr
-1 & \hbox{ otherwise.}
\end{cases}
$$
Note that we have $E(u)=0$, but $u$ is not a constant, so that $\|Du\|>0$ and hence the Ising system generating $\mu$ is not coercive.
 
 Conversely, if the set is the whole $\mathbb Z^d$, in particular it contains $\{e_1,\ldots, e_d\}$. Then if  $i,j\in\mathbb Z^d$ are such that $\|i-j\|=1$ and $u_i\neq u_j$, using the $\alpha_{k}$-connectedness there exist $\{k_\ell\}$ with $\sum_{\ell=1}^N k_\ell =j-i$ and $\alpha_{k_\ell}>0$. Writing $i_n=\sum_{\ell=1}^n k_{\ell}$ and $i_0=i$, we have $\sum_{n=1}^N(u_{i_n}-u_{i_{n-1}})=u_j-u_i\neq 0$, and there exist $n\in\{1,\ldots,N\}$ such that $u_{i_n}-u_{i_{n-1}}\neq 0$. Since $k_{i_n}= k_{i_n- i_{n-1}}$ is such that $\alpha_{k_{i_n}}>0$ and the family of all such $\{k_\ell\}$ is finite we deduce that there exists a constant $C>0$ such that $\alpha_{k_{i_n}}(u_{i_n}-u_{i_{n-1}})^2\ge C$. These indices $i_{n}$ and $i_{n-1}$ may be shared by a number of pairs $(i,j)$ bounded by $(\sum_\ell \|k_\ell\|)^d$, so that we can bound $\#\{ (i,j): u_i\neq u_j\}$ by the energy, and the coerciveness of $\{\alpha_k\}$ follows.
\end{proof}

We now examine non-degenerate non-coercive Ising systems and show that they can be seen as a superposition of a finite number of coercive Ising systems.

\begin{remark}[discrete-to-continuum convergence to multiple parameters]\label{co-no}\rm
Let $\{\alpha_k\}$ be an Ising system with symmetric $\alpha_k\ge0$ satisfying \eqref{decay}.
If the system $\{\alpha_k\}$ is non-degenerate, 
the set \begin{equation}\label{subL}
\mathcal L=\{i\in\mathbb Z^d: i\hbox{ is } \{\alpha_k\}\hbox{-connected to }0\}
\end{equation}
 is a $d$-dimensional Bravais sublattice of $\mathbb Z^d$. If the system is not coercive, we can consider the equivalence classes $\mathbb Z^d/\mathcal L$, which are a finite number $M$, that we can represent as $\mathcal L_\ell=m_\ell+\mathcal L$ for $\ell\in\{1,\ldots, M\}$.  If $v^\e\colon \e\mathcal L_\ell\to \{-1,1\}$ then we can define the convergence $v^\e\to A_\ell$ on the (translated) lattice $\e\mathcal L_\ell$ as the convergence of the piecewise-constant interpolations $v^\ell_\e$ on $\e\mathcal L_\ell$ to $2\chi_{A_\ell}-1$ in $L^1_{\rm loc}(\mathbb R^d)$. Thanks to the $\{\alpha_k\}$-connectedness of $\mathcal L_\ell$ the functionals $E_\e$ are coercive with respect to this convergence, so that if $u^\e$ is a sequence with $\sup_\e E_\e(u^\e)<+\infty$, up to subsequences, we can suppose that, denoting by $u^{\e,\ell}$ the restrictions of $u^\e$ to $\e\mathcal L_\ell$, the corresponding piecewise-constant interpolations $u^\ell_\e$ on $\e\mathcal L_\ell$ converge to $2\chi_{A_\ell}-1$. This defines a convergence $u^\e\to (A_1,\ldots, A_M)$ with respect to which the functionals $E_\e$ are coercive. 
 
 Note that at the same time the piecewise-constant interpolations $u_\e$ of $u^\e$ on $\e\mathbb Z^d$ converge weakly in $L^1_{\rm loc}(\mathbb R^d)$ to 
$u={1\over M}\sum_{\ell=1}^M (2\chi_{A_\ell}-1)$, while the stronger convergence in  Theorem \ref{teo-hom-sys} implies that $A_1=\cdots=A_M$.\end{remark}

The convergence in the previous remark allows to generalize the $\Gamma$-convergence result as follows.

\begin{theorem}[$\Gamma$-convergence to multiple parameters]\label{co-mu-pa}
 Let $\{\alpha_k\}$ be an Ising system with symmetric $\alpha_k\ge0$ satisfying \eqref{decay},
and such that the lattice $\mathcal L$ defined in \eqref{subL} be a $d$-dimensional Bravais sublattice of $\mathbb Z^d$. Then the family $E_\e$ is equicoercive with respect to the convergence $u^\e\to (A_1,\ldots, A_M)$ in Remark {\rm\ref{co-no}} and the $\Gamma$-limit with respect to that convergence is
$$
F_{\mathcal L}(A_1,\ldots, A_M)={1\over M} \sum_{\ell=1}^M F(A_\ell),
$$
where $F=F_\varphi$ is given by \eqref{perimeterF} with $\varphi$ as in \eqref{fi}.
\end{theorem}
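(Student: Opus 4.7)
The plan is to exploit the fact that the interactions of $\{\alpha_k\}$ respect the decomposition $\mathbb{Z}^d = \bigsqcup_{\ell=1}^M \mathcal{L}_\ell$, so that $E_\e$ splits into $M$ independent Ising systems, each living on a translate of $\mathcal{L}$. Once this is established, the result will follow by applying Theorem~\ref{teo-hom-sys} on each $\mathcal{L}_\ell$ (with the appropriate normalization) and summing the resulting $\Gamma$-limits.

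First I would note that if $\alpha_{i-j}>0$ then $i-j$ is $\{\alpha_k\}$-connected to $0$, so $i-j \in \mathcal{L}$; hence all non-trivial interactions connect only points in the same equivalence class. This yields the decomposition
$$
E_\e(u^\e)=\sum_{\ell=1}^M E_\e^\ell(u^{\e,\ell}), \qquad E_\e^\ell(v)=\sum_{i,j\in \mathcal{L}_\ell}\e^{d-1}\alpha_{i-j}(v_i-v_j)^2,
$$
where $u^{\e,\ell}$ denotes the restriction of $u^\e$ to $\e\mathcal{L}_\ell$. Modulo translation, each $E_\e^\ell$ is the Ising energy on the Bravais lattice $\mathcal{L}$ with coefficients $\{\alpha_k\}$; since $\mathcal{L}$ is, by construction, spanned over $\mathbb Z$ by $\{k:\alpha_k>0\}$, Theorem~\ref{coerns} applied on $\mathcal{L}$ in place of $\mathbb Z^d$ shows each $E_\e^\ell$ to be coercive in the sense of Definition~\ref{coer-def}. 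This already gives the equicoerciveness of the family $E_\e$ with respect to the multi-parameter convergence $u^\e \to (A_1,\dots,A_M)$ of Remark~\ref{co-no}: boundedness of $E_\e(u^\e)$ forces boundedness of each $E_\e^\ell(u^{\e,\ell})$, hence of each $\|Du^\ell_\e\|$.

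Next I would invoke the analog of Theorem~\ref{teo-hom-sys} on the Bravais lattice $\mathcal{L}_\ell$. Repeating the proof of that theorem with $\mathcal{L}_\ell$ in place of $\mathbb Z^d$ (choosing, for each interaction $k\in \mathcal{L}$, a sub-lattice of $\mathcal{L}_\ell$ with basis involving $k$, exactly as in the original argument) yields a perimeter energy of the same form but with a prefactor $1/M$ coming from the fact that $\mathcal{L}_\ell$ has point-density $M^{-1}$ inside $\mathbb Z^d$:
$$
\Gamma\text{-}\!\lim_{\e \to 0} E_\e^\ell(u^{\e,\ell}) = \frac{1}{M}\,F(A_\ell),
$$
with the same $\varphi$ as in \eqref{fi} since the set of interactions is unchanged. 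The liminf inequality for $E_\e$ then follows from the decomposition by summing the liminf inequalities of the pieces, and a recovery sequence is obtained by constructing an optimal $v^{\e,\ell}$ for each $A_\ell$ and gluing them together by setting $u^\e_i = v^{\e,\ell}_i$ for $i\in \mathcal{L}_\ell$, which is well-defined because the $\mathcal{L}_\ell$ are disjoint.

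The main obstacle I anticipate is the careful verification of the $1/M$ prefactor in the Bravais-lattice version of Theorem~\ref{teo-hom-sys}: although the paper flags this as a matter of heavier notation, one must track fundamental-cell volumes, verifying that when $\mathbb Z^d$ is replaced by $\mathcal{L}_\ell$, the number of equivalence classes of $\mathcal{L}_\ell/\mathcal{L}_k$ (where $\mathcal{L}_k$ is the sub-lattice of $\mathcal{L}_\ell$ used in the proof for a given interaction $k$) drops by the factor $M$ relative to the classes $\mathbb Z^d/\mathcal{L}_k$, which is precisely what produces the $1/M$ normalization and leaves $\varphi$ unchanged.
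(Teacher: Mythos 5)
Your proposal is correct and follows essentially the same route as the paper: decompose $E_\e$ into the independent sublattice energies $E^\ell_\e$ on the cosets $\mathcal L_\ell$ (possible since $\alpha_k>0$ forces $k\in\mathcal L$) and identify the $\Gamma$-limit of each piece as ${1\over M}F$ with the same $\varphi$. You merely make explicit what the paper leaves implicit, namely the coercivity of each piece via the spanning property of $\{k:\alpha_k>0\}$ in $\mathcal L$ and the $1/M$ density factor in the Bravais-lattice version of Theorem \ref{teo-hom-sys}.
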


\begin{proof}

The functional $E_\e$ can be written as a sum of functionals $E^\ell_\e$ defined on $v\colon \e\mathcal L_\ell\to\{-1,1\}$  by
\begin{equation}\label{eell}
E^\ell_\e(v)=\sum_{i,j\in\mathcal L_\ell}\e^{d-1} \alpha_{j-i} (v_j-v_i)^2,
\end{equation}
with the usual notation $v_i=v(\e i)$.
%
Since $E_\e$ converge in the sense of Theorem \ref{teo-hom-sys} to the functional $F$ therein, we note that each of these functionals $E^\ell_\e$ $\Gamma$-converges to ${1\over M} F$ with respect to the corresponding convergence. Hence, the claim of the theorem follows.\end{proof}

The next theorem is an immediate consequence of the characterization of  support functions of zonoids through their generating measures. Note however that optimal approximation of zonoids is a delicate problem (see e.g.~\cite{Bourgain}).

\begin{theorem}[approximate reachability of zonoids]\label{teo-approx-sys} 
Let $\varphi$ be a support function of a zonoid. Then for every $\eta>0$ there exists a coercive Ising system with a limit energy density $\varphi_\eta$ and Wulff shape a rational zonotope such that 
\begin{equation}\label{approx-phi}
\max\{|\varphi_\eta(\nu)-\varphi(\nu)|: \nu\in S^{d-1}\}<\eta.
\end{equation}
\end{theorem}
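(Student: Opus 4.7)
The plan is to approximate the generating measure of the zonoid uniformly (in the sense of support functions) by a finite symmetric combination of Dirac masses at rational directions, then add a small coercivity-ensuring perturbation supported on $\{\pm e_1,\ldots,\pm e_d\}$, and finally invoke Theorems \ref{teo-hom-sys} and \ref{coerns} to realize the resulting rational zonotope as the $\Gamma$-limit of a coercive Ising system.

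In detail, let $\mu$ be the generating measure of the zonoid whose support function is $\varphi$, so that
$$
\varphi(z)=\int_{S^{d-1}}|\langle z,\nu\rangle|\,d\mu(\nu).
$$
For a given $\eta>0$, I would choose $\delta>0$ with $\delta\,\mu(S^{d-1})<\eta/2$ and partition $S^{d-1}$ into finitely many symmetric pairs of Borel sets $B_1,-B_1,\ldots,B_N,-B_N$ of diameter less than $\delta$. By density of rational directions in $S^{d-1}$, in each $B_i$ I can pick a rational direction $\nu_i\in B_i$, and I define
$$
\mu_0=\sum_{i=1}^N \frac{\mu(B_i)+\mu(-B_i)}{2}\bigl(\delta_{\nu_i}+\delta_{-\nu_i}\bigr).
$$
Since $||\langle z,\nu\rangle|-|\langle z,\nu'\rangle||\le \|\nu-\nu'\|\le \delta$ whenever $\nu,\nu'$ lie in the same $B_i$, and $|z|=1$, replacing $\mu$ by $\mu_0$ changes the support function pointwise by at most $\delta\,\mu(S^{d-1})<\eta/2$ uniformly in $z\in S^{d-1}$.

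Next, to guarantee coerciveness, I add a small symmetric mass on the coordinate directions, setting
$$
\mu_\eta=\mu_0+\sigma\sum_{j=1}^d\bigl(\delta_{e_j}+\delta_{-e_j}\bigr),
$$
with $\sigma<\eta/(4\sqrt d)$. The contribution of the added term to the support function is $2\sigma\|z\|_1\le 2\sigma\sqrt d<\eta/2$, so the total uniform error is less than $\eta$. Since $\mu_\eta$ is a finite sum of Dirac masses on rational directions, it generates a rational zonotope with support function
$$
\varphi_\eta(z)=\int_{S^{d-1}}|\langle z,\nu\rangle|\,d\mu_\eta(\nu),
$$
and $\max_{\nu\in S^{d-1}}|\varphi_\eta(\nu)-\varphi(\nu)|<\eta$.

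By Theorem \ref{teo-hom-sys} there exists an Ising system $\{\alpha_k\}$ of finite range generating $\mu_\eta$. By construction, the support of $\mu_\eta$ contains $\{\pm e_1,\ldots,\pm e_d\}$, so the set $\{k\in\mathbb Z^d:\mu_\eta(k/\|k\|)>0\}$ contains a basis of $\mathbb Z^d$ and hence spans $\mathbb Z^d$ over $\mathbb Z$; Theorem \ref{coerns} then yields that $\mu_\eta$ can be generated by a coercive Ising system, as required. The main obstacle is ensuring rational directions, coerciveness, and uniform (not merely weak$^*$) proximity to $\varphi$ at the same time; this is handled by combining the Lipschitz continuity of $\nu\mapsto|\langle z,\nu\rangle|$ on the sphere (giving uniform control) with the explicit coordinate-direction perturbation (giving coerciveness with arbitrarily small cost).
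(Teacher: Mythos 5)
Your proof is correct and follows essentially the same route as the paper: approximate the generating measure by a finite symmetric sum of Dirac masses at rational directions, then add small masses at $\pm e_1,\dots,\pm e_d$ to gain coercivity at a cost of less than $\eta/2$ in the support function, and realize the resulting rational zonotope by a (finite-range, coercive) Ising system. The only difference is cosmetic: you obtain the uniform bound \eqref{approx-phi} quantitatively via a partition of $S^{d-1}$ and the Lipschitz dependence of $\nu\mapsto|\langle z,\nu\rangle|$, whereas the paper invokes weak$^*$ approximation by finite sums of Dirac deltas together with the induced uniform convergence of the support functions.
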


\begin{proof} Let $\{\alpha^\eta_{k}\}$ be an Ising system parameterized by $\eta$, and $\varphi_\eta$ the corresponding energy function.
Note that we can write
$$
\varphi(z)=\int_{S^{d-1}}|\langle z,\nu\rangle|d\mu_\eta,
$$
where 
$$
\mu_\eta=2\sum_{k\in\mathbb Z^d}\alpha^\eta_k\|k\|\delta_{ {k\over\|k\|}}.
$$
The convergence
$$
\lim_{\eta\to 0}\max\{|\varphi_\eta(\nu)-\varphi(\nu)|: \nu\in S^{d-1}\}=0
$$
is then implied by the weak$^*$ convergence of $\mu_\eta\to \mu$, where $\mu$ is a generating measure for $\varphi$. The existence of such $\mu_\eta$ is then ensured by the weak$^*$ density of finite sums of Dirac deltas. We may also suppose that $\alpha_k\ge\eta$ for $k\in\{e_1,\ldots,e_d\}$, up to adding a term $\sum_n\eta \delta_{e_n}$ whose weak$^*$ limit is the null measure. The claim then follows after reparameterizing the measures $\mu_\eta$.
\end{proof}

\subsection{Convergence of Wulff shapes to rational zonoids}
If $\varphi(\nu)>0$ for all $\nu\in S^{d-1}$ the Wulff shape $W_\varphi$ of $\varphi$ as defined in \eqref{WuSh} 
admits a variational characterization,  as the minimizer symmetric with respect to $0$ of
\begin{equation}
\min\{F(A): |A|=|W_\varphi|\},
\end{equation}
where the functional $F$ is as in \eqref{perimeterF}. The condition $\varphi>0$ is necessary and sufficient in order that $|W_\varphi|>0$. If this condition is not satisfied on the whole $S^{d-1}$ by the convexity of $\varphi$, either $\varphi$ is identically $0$ or $\varphi>0$ on a $d'$-dimensional space with $d'<d$, and we can consider it as defined on $\mathbb R^{d'}$. We will then restrict to the case that $\varphi(\nu)>0$ for all $\nu\in S^{d-1}$.

With this variational characterization in mind, given a homogeneous Ising system $\{\alpha_{k}\}$ generating a function $\varphi$ we can define a {\em discrete Wulff shape for} $\{\alpha_{k}\}$ as any $u^\e$ solution of the minimum problem
\begin{equation}
\min\Bigl\{ E_\e(u): \#\{i: u_i=1\}= N_\e\Big\},
\end{equation}
where $N_\e\in\mathbb N$ is such that $\e^dN_\e$ tends to $|W_\varphi|$; e.g.,~$N_\e=\big\lfloor{1\over\e}|W_\varphi|^{1\over d}\big\rfloor^d$.

The following result shows the relation between rational zonoids and discrete Wulff shapes.

\begin{theorem}[Rational zonoids as limits of discrete Wulff shapes]
Let $\varphi$ be the energy density of the limit $F$ of the Ising system $\{\alpha_k\}$ as in Theorem {\rm\ref{teo-hom-sys}}, and let $W$ be the corresponding related rational zonoid. Suppose that $\varphi(\nu)>0$ for all $\nu\in S^{d-1}$; then there exists a family $u^\e$ of discrete Wulff shapes such that $u^\e\to W$.
\end{theorem}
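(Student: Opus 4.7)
The strategy is the standard coupling of $\Gamma$-convergence with the anisotropic Wulff inequality under a volume constraint. I would first upper-bound the constrained minimum $m_\e := \min\{E_\e(u)\colon \#\{i : u_i = 1\} = N_\e\}$ by $F(W) + o(1)$. A candidate sequence is $\tilde u^\e_i = 1$ if $\e i \in W$ and $\tilde u^\e_i = -1$ otherwise: this is essentially the recovery sequence appearing in the proof of Theorem \ref{teo-hom-sys}, extended from polyhedra to $W$ by the usual density argument for perimeter energies. A correction involving $O(\e^{1-d})$ spin flips in a thin boundary layer of $\partial W$ enforces the exact count $\#\{\tilde u^\e_i=1\} = N_\e$ while contributing only $o(1)$ to the energy, so that $E_\e(\tilde u^\e) \to F(W)$.

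For the matching lower bound and the identification of the limit, let $u^\e$ be any sequence of discrete Wulff shapes. Assuming equi-coercivity (granted, for instance, when $\{\alpha_k\}$ is coercive in the sense of Definition \ref{coer-def}), and after translating by a lattice vector to keep $\{i : u^\e_i = 1\}$ centered, I would extract a subsequence with $u^\e \to A$ for some set of finite perimeter $A$. The volume constraint passes to the limit since $\e^d N_\e \to |W|$, giving $|A| = |W|$. The $\Gamma$-liminf inequality of Theorem \ref{teo-hom-sys} yields $F(A) \le \liminf_\e E_\e(u^\e) = \lim_\e m_\e \le F(W)$. Because $\varphi > 0$ on $S^{d-1}$, the anisotropic Wulff inequality asserts that $F(A) \ge F(W)$ for every set with $|A| = |W|$, with equality only for translates of $W$. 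Hence $A$ is such a translate, and by tuning the translation in the previous step one arranges $A = W$.

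The main obstacle is equi-coercivity in the non-coercive case. By Theorem \ref{co-mu-pa} the natural compactness is of multi-parameter type: a subsequence of $u^\e$ converges to an $M$-tuple $(A_1,\dots,A_M)$ indexed by the sublattices $\mathcal L_\ell$ of Remark \ref{co-no}, with total volume $\sum_\ell |A_\ell| = M|W|$ and limit functional $\frac{1}{M}\sum_\ell F(A_\ell)$. One has to exhibit a family of discrete Wulff shapes for which, in this limit, every $A_\ell$ coincides with $W$, so that the piecewise-constant interpolations $u_\e$ converge strongly to $2\chi_W-1$. The construction uses the freedom in selecting an equivalent Ising system (Remark \ref{no-inj}) together with the translation symmetry among the $\mathcal L_\ell$, to assemble a recovery-type minimizing sequence whose restriction to each $\e\mathcal L_\ell$ individually realizes the Wulff shape $W$ rather than concentrating the $+1$-phase on a single sublattice.
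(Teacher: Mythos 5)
For the coercive case your argument is essentially the one the paper uses (and spells it out in more detail than the paper does): $\Gamma$-convergence plus the variational characterization of $W_\varphi$ as the constrained minimizer, a translation/confinement argument giving compactness of the minimizers, and recovery sequences adjusted to meet the exact constraint $\#\{i\colon u_i=1\}=N_\e$. One small caution: the adjustment must be done coherently (e.g.\ adding or removing a monolayer patch along a flat portion of the interface, or dilating the set by $1+O(\e)$), since $O(\e^{1-d})$ scattered flips, each costing of order $\e^{d-1}$, only give an $O(1)$ bound, not $o(1)$. Up to such standard precautions this half matches the paper.

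The non-coercive case is where your proposal has a genuine gap. A discrete Wulff shape is an exact minimizer of the constrained problem for the given coefficients $\{\alpha_k\}$, so two of your tools are unavailable: you cannot pass to an equivalent Ising system via Remark \ref{no-inj}, because equivalent systems share the $\Gamma$-limit but not the energies $E_\e$ nor their constrained minimizers (Remark \ref{co-si} exhibits equivalent systems of which one is coercive and one is not), and a ``recovery-type minimizing sequence'' is not a family of minimizers, so constructing one proves nothing about discrete Wulff shapes. The crux --- which your last paragraph names but does not address --- is to determine how minimizers distribute the prescribed number $N_\e$ of $+1$ spins among the $M$ sublattices $\mathcal L_\ell$. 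The paper attacks exactly this point: it writes the constrained minimum as a minimum over splittings $\sum_{\ell} N^\ell_\e=N_\e$ of $M$ independent, coercive sublattice problems, passes to the limit in each, and then optimizes over the limit fractions $\lambda_\ell$, concluding that the equal split $\lambda_\ell=|W_\varphi|/M$ is optimal and hence that each sublattice minimizer converges to $W_\varphi$. Nothing in your sketch substitutes for this computation, and the point is genuinely delicate: since each interpolation on $\e\mathcal L_\ell$ carries volume $M\lambda_\ell$ and the $\Gamma$-limit of $E^\ell_\e$ is $\tfrac1M F$, the $\ell$-th constrained sublattice minimum behaves like $\tfrac1M\bigl(M\lambda_\ell/|W_\varphi|\bigr)^{(d-1)/d}F(W_\varphi)$, a concave function of $\lambda_\ell$, so that concentrating the whole $+1$ phase on a single sublattice is an energetically competitive (indeed, in the example of Remark \ref{co-si}, strictly cheaper, of order $M^{-1/d}F(W_\varphi)$) competitor to the equal split of value $F(W_\varphi)$. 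Any complete argument must confront this competitor quantitatively; a construction that merely ``avoids concentrating the $+1$-phase on a single sublattice'', as you propose, cannot decide what the actual minimizers do.
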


\begin{proof} If the system $\{\alpha_k\}$ is coercive then this result is a consequence of the $\Gamma$-convergence of $E_\e$ to $F$, the fact that, by a translation argument, we can suppose that discrete Wulff shapes are bounded and with barycenter at distance at most of order $\e$ from $0$, and that recovery sequences $u^\e$ for a set $A$ with $|A|=|W_\varphi|$ can be taken with $\#\{i: u^\e_i=1\}= N_\e$.

If the system is not coercive, by the condition $\varphi>0$ it must nevertheless be non-dege\-ne\-rate.  As in the proof of Theorem \ref{co-mu-pa} the functional $E_\e$ can be written as the sum of the functionals $E^\ell_\e$ in \eqref{eell}, sing the same notation as in Remark \ref{co-no} for the sets $\mathcal L_\ell$. Since each $\mathcal L_\ell$ is disconnected from $\mathcal L_{\ell'}$ if $\ell \neq\ell'$, we can decompose the minimum problem as
\begin{eqnarray*}
&&
\min\Bigl\{ E_\e(u): \#\{i: u_i=1\}= N_\e\Big\}\\
&&
=\min\bigg\{\sum_{\ell=1}^M \min \Bigl\{ E^\ell_\e(u): \#\{i\in\mathcal L_\ell: u_i=1\}= N^\ell_\e\Big\}: \sum_{\ell=1}^M N^\ell_\e= N_\e\bigg\}.
\end{eqnarray*}
We can suppose that $\e^dN^\ell_\e\to \lambda_\ell$ for each $\ell\in\{1,\ldots, M\}$,
with $\sum_{\ell=1}^M\lambda_\ell= |W_\varphi|$.
Since the unit cell of $\mathcal L$ has measure $M$, we have
\begin{eqnarray*}
\lim_{\e\to 0}\min \Bigl\{ E^\ell_\e(u): \#\{i\in\mathcal L_\ell: u_i=1\}= N^\ell_\e\Big\}&=& {1\over M}F\Big(M {\lambda_\ell\over |W_\varphi|}W_\varphi \Big)\\
&=&M^{d-1}{\lambda^{d-1}_\ell\over |W_\varphi|^{d-1}}F(W_\varphi ),
\end{eqnarray*}
and
\begin{eqnarray*}
&&
\lim_{\e\to 0}\min\Bigl\{ E_\e(u): \#\{i: u_i=1\}= N_\e\Big\}\\
&&
=\min\bigg\{\sum_{\ell=1}^M M^{d-2} {\lambda^{d-1}_\ell\over |W_\varphi|^{d-1}}F(W_\varphi ): \sum_{\ell=1}^M{\lambda_\ell\over  |W_\varphi|}=1\bigg\}= F(W_\varphi).
\end{eqnarray*}
In the last equality we have used the convexity of the $(d-1)$-th power, which implies that $\lambda_\ell={1\over M}|W_\varphi|$ for all $\ell$. This also implies that, using the arguments for coercive systems, we can take all interpolations of minimizers $u^\e_\ell$ in each $\e\mathcal L_\ell$ converging to $W_\varphi$. The corresponding $u^\e$ give discrete Wulff shapes converging to $W_\varphi$. 
\end{proof}

\subsection{Asymptotic surface tension of an Ising system}
The simplest variational way to associate an energy density to an Ising system is by computing the average limit surface energy; that is, the energy necessary to have a transition from a state $1$ to a state $-1$ through an hyperplane oriented with a normal $\nu$. This can be done for more general non-homogeneous Ising systems. To that end, given non-negative coefficiants $\{a_{ij}\}$ we define a localized energy on a cube $TQ^\nu$, where $T>0$ and $Q^\nu$ is a unit cube centered in $0$ with two faces orthogonal to $\nu$, as follows:
$$
E(u,TQ^\nu)= {1\over T^{d-1}} \sum_{i\hbox{ or }j\in \mathbb Z^d\cap TQ^\nu}a_{ij}(u_i-u_j)^2.
$$
Note that, if we set $\e={1\over T}$, this can be interpreted as the part of the energy \eqref{fegen} `contained in the cube $Q^\nu$'. 

In order to impose boundary conditions, due to the non-local nature of the energies we have to fix the values of functions outside $TQ^\nu$. To that end, we consider minimum problems of the form
\begin{equation}
m_T(\nu)=\min\Bigl\{ E(u,TQ^\nu): u_i=\pm 1 \hbox{ if }\pm\langle i, \nu\rangle >0\hbox{ for all }i\not\in TQ^\nu\Bigl\}.
\end{equation}
This value can be considered as the minimum value of the transition from $-1$ to $1$ around the hyperplane $\Pi^\nu=\{z\in \mathbb R^d: \langle z,\nu\rangle=0\}$.

\begin{definition}[surface tension of an Ising system]
The {\em surface tension} of the Ising system $\{a_{ij}\}$ is defined as 
\begin{equation}\label{surtengen}
\varphi(\nu)=\liminf\limits_{T\to+\infty} {1\over T^{d-1}}m_T(\nu).
\end{equation}
\end{definition} 

We note that the definition of surface tension does not require any condition on the coefficients $a_{ij}$ except their non-negativity. In the case of coefficients $a_{ij}=\alpha_{i-j}$ a straightforward computation gives the  formula for $\varphi$.

\begin{proposition}[surface tension of a homogeneous Ising system]
If $\alpha_k\ge 0$ for all $k\in\mathbb Z^d$ and $a_{ij}=\alpha_{i-j}$ then the {\em surface tension} $\varphi$ of the Ising system is given by  
\begin{equation}
\varphi(\nu)= 4 \sum_{k\in\mathbb Z^d\setminus \{0\}} \alpha_k|\langle k,\nu \rangle|.
\end{equation}
Furthermore, the $\liminf$ in \eqref{surtengen} is a limit.
\end{proposition}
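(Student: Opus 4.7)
The plan is to sandwich $m_T(\nu)$ between matching upper and lower bounds, both converging to $4\sum_k\alpha_k|\langle k,\nu\rangle|$ as $T\to\infty$. This will simultaneously establish the explicit formula for $\varphi(\nu)$ and upgrade the $\liminf$ in \eqref{surtengen} to a genuine limit.

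For the upper bound I would test with the step configuration $u^*_i=\mathrm{sign}(\langle i,\nu\rangle)$, which is admissible by construction. Writing $E(u^*,TQ^\nu)=4T^{-(d-1)}\sum_k\alpha_k N_k(T)$, where $N_k(T)$ counts the ordered pairs $(i,i-k)$ with opposite $u^*$-values and at least one endpoint in $TQ^\nu$, an elementary slab-and-cube count gives $N_k(T)/T^{d-1}\to|\langle k,\nu\rangle|$ for every $k\neq 0$, together with a uniform majorant $\alpha_k N_k(T)/T^{d-1}\le C\alpha_k\|k\|$ valid for all $T\ge 1$ (splitting the regimes $|\langle k,\nu\rangle|\le T$ and $|\langle k,\nu\rangle|>T$). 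The decay assumption \eqref{decay} makes this majorant summable in $k$, so dominated convergence yields $E(u^*,TQ^\nu)\to 4\sum_k\alpha_k|\langle k,\nu\rangle|$.

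For the lower bound I fix any admissible $u$ and any $k$ with $\langle k,\nu\rangle\ne 0$, and decompose $\mathbb{Z}^d$ into the lines $L_m=m+\mathbb{Z}k$. On each such line the boundary condition forces $u_{m+nk}$ to take opposite values at $n\ll 0$ and $n\gg 0$, so $\sum_n(u_{m+nk}-u_{m+(n-1)k})^2\ge 4$; moreover, when $L_m$ meets $\Pi^\nu$ at an interior point of $TQ^\nu$ at distance at least $\|k\|$ from $\partial(TQ^\nu\cap\Pi^\nu)$, the forced transition must occur at a pair whose endpoints both lie in $TQ^\nu$, so it contributes to $E(u,TQ^\nu)$. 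The number of such crossing lines is $T^{d-1}|\langle k,\nu\rangle|+o(T^{d-1})$: writing $k=Mk_0$ with $k_0$ primitive, the projection of $\mathbb{Z}^d$ along $k_0$ onto $\Pi^\nu$ is a $(d-1)$-dimensional lattice of covolume $1/|\langle k_0,\nu\rangle|$, and each of its classes splits into $M$ distinct $\mathbb{Z}k$-cosets---the same covolume bookkeeping that appears in the proof of Theorem \ref{teo-hom-sys} via the sublattice $\mathcal{L}_k$. Applying Fatou's lemma in the sum over $k$ then yields $\liminf_{T\to\infty}E(u,TQ^\nu)\ge 4\sum_k\alpha_k|\langle k,\nu\rangle|$ uniformly in $u$.

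The main technical obstacle is uniformly interchanging the limit $T\to\infty$ with the infinite sum over $k$: per-direction errors in the slab count and the line count depend on $\|k\|$ and must be dominated by a $k$-summable quantity. Dominated convergence handles this in the upper bound thanks to \eqref{decay}, and Fatou's lemma handles the other direction without requiring a uniform majorant. Combining the two bounds gives $\lim_{T\to\infty}m_T(\nu)=4\sum_k\alpha_k|\langle k,\nu\rangle|=\varphi(\nu)$, simultaneously producing the formula and the upgrade from $\liminf$ to $\lim$.
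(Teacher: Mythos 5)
Your proposal is correct and follows essentially the same route as the paper: the lower bound by counting, for each fixed $k$ with $\langle k,\nu\rangle\neq 0$, the lines in direction $k$ crossing $TQ^\nu\cap\Pi^\nu$, each of which is forced by the boundary data to carry at least one sign change, and then passing to the full sum over $k$ (your Fatou step is the paper's truncation $\|k\|\le K$ followed by $K\to+\infty$); and the upper bound by testing with the plane-like configuration $u_i=\mathrm{sign}(\langle i,\nu\rangle)$. Your treatment is somewhat more explicit on the interchange of the limit in $T$ with the sum over $k$ (the dominated-convergence majorant $C\alpha_k\|k\|$ via \eqref{decay}), which the paper compresses into ``a direct computation.''
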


\begin{proof}
With fixed $k\in\mathbb Z^d$ with $\langle \nu,k\rangle\neq0$, we note that for any test function $u$ and any line $L_{i,k}=\{i+tk: t\in\mathbb R\}$ with $i\in\mathbb Z^d$ and such that $L_{i,k}\cap TQ^\nu\cap \Pi^\nu\neq \emptyset$, there exist and least one index $n\in\mathbb Z$ such that $u_{i+nk}\neq  u_{i+(n+1)k}$. This implies that $$m_T(\nu)\ge 4T^{d-1} \sum_{\|k\|\le K}\alpha_k|\langle k,\nu\rangle| +O(T^{d-2})$$ for every fixed $K$, and the lower bound letting $K\to+\infty$. An upper bound is simply given taking $u_i=1$ if $\langle i, \nu\rangle \ge0$ and $u_i=-1$ if $\langle i, \nu\rangle <0$ for $i\in\mathbb Z^d$. A direct computation shows that this is a minimizing sequence and the existence of the limit in \eqref{surtengen}.
\end{proof}

\subsection{Directed Ising systems and non-centered rational zonoids}\label{directed}
We conclude this section with a generalization of Ising systems, where the energies take the form
 \begin{equation}\label{fegen-dir}
E_\e(u)=\sum_{i,j\in\mathbb Z^d}\e^{d-1} \,\alpha_{i-j}((u_i-u_j)^+)^2,
\end{equation}
where $t^+$ indicates the positive part of $t\in\mathbb R$. In this case the interaction between two points $i$ and $j$ such that $u_i=1$ and $u_j=-1$ is taken into account with the coefficient $\alpha_{i-j}$, while if $u_j=1$ and $u_i=-1$ with the coefficient $\alpha_{j-i}$. This is a particular case of the inhomogeneous directed Ising systems studied in \cite{CK}.

For energies \eqref{fegen-dir} we do not assume that $\alpha_{-k}=\alpha_k$ in order not to lose in generality. Nevertheless, the proof of the convergence in Theorem \ref{teo-hom-sys} works essentially unchanged, with the limit energy density given by
\begin{equation}\label{Ising-8}
\varphi(z) =  4\sum_{k\in\mathbb Z^d} \alpha_{k} \langle k,z\rangle^+.
\end{equation}
Note that in the perimeter functional \eqref{perimeterF} the integration is done with $\nu$ the inner normal to the set $A$, which may reflect the asymmetry of the Ising system.

If the range of $\alpha_k$ is finite, then the Wulff shape of the function $\varphi$ is 
\begin{equation}\label{WuSh}
W_\varphi=\Bigl\{w\in\mathbb R^d\colon  \hbox{ there exist } s_k\in[0,1]\hbox{ such that } w=4\sum_{k\in\mathbb Z^d} s_k\alpha_{k} k\Bigr\};
\end{equation}
that is, $W_\varphi$ is the finite sum of the segments $[0,w_\ell]$ in $\mathbb R^d$, where the set $\{w_\ell\}$ coincides with the set of  $4\alpha_{k} k$  such that $\alpha_k>0$. This is the translation of a (centered) rational zonotope by the vector ${1\over2}\sum_\ell w_\ell$.  Proceeding as in Theorem \ref{teo-hom-sys} we deduce that directed Ising systems correspond to all translations of (centered) rational zonoids, and then, using Theorem \ref{teo-approx-sys}, that all (non-centered) zonoids are reached by sequences of zonotopes generated by directed Ising systems.

\section{Connections with discrete-to-continuum homogenization}
We now highlight the connection between the definitions given until now and general results on periodic Ising systems. This will allow us to define a generalization of rational zonoids.

\smallskip

The limit in Theorem {\rm\ref{teo-hom-sys}} can be interpreted as a particular case of homogenization of periodic Ising systems. We say that an Ising system $\{a_{ij}\}$ is {\em periodic with period $N$} if we have
\begin{equation}\label{per-1}
a_{i+Ne_n\,j+Ne_n} =a_{ij}
\end{equation}
for all $i,j\in\mathbb Z^d$ and $n\in\{1,\ldots, d\}$, which in turn is equivalent to
\begin{equation}\label{per-2}
a_{i+k\,j+k} =a_{ij}
\end{equation}
for all $i,j\in\mathbb Z^d$ and $k\in N\mathbb Z^d$.
 If $N=1$ then this condition is equivalent to requiring that $a_{ij}= \alpha_{i-j}$ for some $\alpha_k$, so that homogeneous Ising systems coincide with Ising systems periodic with period $1$.

For periodic systems we have the following result.

\begin{theorem}[homogenization and crystallinity of periodic Ising systems \cite{BP,CK}]\label{cryst}
Let $\{a_{ij}\}$ be an $N$-periodic Ising system satisfying
\begin{equation}\label{coerN}
\max_{i\in \mathbb Z^d}\sum_{j\in\mathbb Z^d}a_{ij}\|j-i\|<+\infty.
\end{equation}
Then there exists the $\Gamma$-limit in the sense of Theorem {\rm\ref{teo-hom-sys}}. If in addition the system is with finite range; that is, there exists a constant $K$ such that  $a_{ij}=0$ if $\|i-j\|>K$ then the Wulff shape of the limit functional $F$ is a polytope.
\end{theorem}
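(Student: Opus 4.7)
The plan is to handle the two assertions separately, following the lines of \cite{BP} for the homogenization and \cite{CK} for the crystallinity.

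For the $\Gamma$-convergence part, I would first define the candidate homogenized surface tension via an asymptotic cell formula:
$$
\varphi_{\hom}(\nu) = \lim_{T\to+\infty} \frac{1}{T^{d-1}}m_T(\nu),
$$
where $m_T(\nu)$ is the minimum of $E(\cdot, TQ^\nu)$ among configurations equal to $\pm 1$ on the two sides of $\Pi^\nu$ outside the cube. Existence of the limit follows from an almost-subadditivity argument exploiting the $N$-periodicity (\ref{per-2}): partition a large cube of side $T$ into smaller cubes of side aligned with $N\mathbb Z^d$, glue nearly optimal configurations from each sub-cube, and control the interface error using the decay assumption (\ref{coerN}). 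Periodicity ensures that the minimum cost on $N$-translated cubes is unchanged, so an averaging/Fekete-type argument gives a well-defined limit depending only on $\nu$.

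Given $\varphi_{\hom}$, the $\Gamma$-$\liminf$ inequality is obtained by a blow-up/localization argument: around $\mathcal H^{d-1}$-a.e.\ point of the reduced boundary of the limit set $A$, one rescales at a mesoscale $T\e$ with $T\to\infty$ slowly, recognizes the localized energy as essentially $m_T(\nu_A(x))/T^{d-1}$, and integrates against $\mathcal H^{d-1}\LLL\partial^*A$. The $\Gamma$-$\limsup$ inequality is first proved on polyhedral sets by pasting near each face an almost-optimal configuration extracted from the cell formula, with a transition layer handled by the decay (\ref{coerN}); the general case then follows by the density argument recalled after (\ref{perimeterF}).

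For the crystallinity in the finite-range case, I would adapt the combinatorial min-cut strategy of Chambolle--Kreutz. When the range is bounded by $K$, the problem defining $m_T(\nu)$ can be rephrased as a weighted min-cut problem in a periodic graph whose vertices are the points of $\mathbb Z^d$ (modulo $N\mathbb Z^d$) and whose edges carry weights $a_{ij}$ for $\|i-j\|\le K$. Minimizers can be chosen plane-like, and each such optimal cut is described by finitely many combinatorial data: the selection, within one period cell, of which vertices lie on the $+1$ side. As $\nu$ varies within an open cone for which the same combinatorial cut-type is selected, $\varphi_{\hom}(\nu)$ depends \emph{linearly} on $\nu$. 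Since there are only finitely many such combinatorial types, $\varphi_{\hom}$ is the maximum of finitely many linear functions of $\nu$, and hence its Wulff shape, as defined in (\ref{WuSh}), is a polytope.

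The main obstacle I expect is the crystallinity step, specifically establishing that optimal cuts can be taken plane-like (uniformly in $T$) and that only finitely many periodic cut-types arise. This is the technical core of \cite{CK} and requires a careful rigidity/plane-like argument akin to those used for minimal surfaces in periodic environments. The homogenization part, by contrast, is an adaptation of well-established discrete-to-continuum techniques and becomes essentially routine once the cell formula is established.
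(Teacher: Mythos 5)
The paper itself does not prove Theorem \ref{cryst}: it is imported as a known result, the existence of the $\Gamma$-limit from \cite{BP} and the crystallinity statement from \cite{CK}, so there is no internal proof to compare against. Judged on its own terms, your first half is a correct and standard outline of the homogenization part: an asymptotic cell formula for $\varphi_{\hom}(\nu)$ whose existence follows from almost-subadditivity plus $N$-periodicity (with the tail of the interactions controlled by \eqref{coerN}), a blow-up argument for the lower bound, and a construction on polyhedral sets plus density for the upper bound. This matches the strategy of \cite{BP} and, while every step would still need to be written out (in particular the truncation of long-range interactions in the liminf inequality), it is a viable plan.

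The crystallinity half, however, contains a genuine gap. You claim that an optimal cut for $m_T(\nu)$ is ``described by finitely many combinatorial data: the selection, within one period cell, of which vertices lie on the $+1$ side,'' and that $\varphi_{\hom}$ is therefore a maximum of finitely many linear functions of $\nu$. This does not hold as stated: an interface in $TQ^\nu$ crosses on the order of $(T/N)^{d-1}$ distinct period cells, and for irrational $\nu$ a minimizing (plane-like) configuration cannot be $N\mathbb Z^d$-periodic, so its trace differs from cell to cell; there is no single per-cell datum encoding the cut, and no obvious finite list of ``combinatorial types'' whose cost is linear in $\nu$ on cones. The assertion that $\varphi_{\hom}$ is piecewise linear with finitely many pieces is exactly the content of the theorem, so as written this step restates the claim rather than proving it. The actual argument of \cite{CK} is considerably more delicate (existence of plane-like minimizers with suitable rigidity, differentiability of the homogenized density away from rational directions, from which one deduces that facets have rational normals and are finite in number), and none of that machinery is supplied or replaced by your sketch. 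You acknowledge this is the main obstacle, which is honest, but it means the proposal establishes (modulo standard technique) only the first assertion of the theorem, while the second is deferred entirely to the cited reference.
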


\subsection{Zonotopes generated by homogenized Ising systems}
In this section we observe that the definition of rational zonotope can be generalized in view of Theorem \ref{cryst}, as the following definitions.

\begin{definition}\label{N-zo}
We say that $W$ is a {\em rational zonotope of order $N$} if the corresponding $\varphi$ is the limit of an $N$-periodic Ising system with finite range.
We say that $W$ is  a {\em rational zonoid of order $N$} if the corresponding $\varphi$ is the limit of an $N$-periodic Ising system satisfying \eqref{coerN}. 
We say that $W$ is  a {\em zonoid of order $N$} if it is the limit in the Hausdorff distance of rational zonotopes of order $N$.
\end{definition}

\begin{remark}\rm
 Rational zonotopes and zonoids of order $1$ are  rational zonotopes and zonoids as defined above.
 Note that condition \eqref{coerN} corresponds to \eqref{decay} if $N=1$.
 
The analysis in \cite[Proposition 2.9]{CK} imply that the energy density $\varphi$ of a  rational zonotope of order $N$ is differentiable outside rational directions, which suggests that $(d-1)$-dimensional faces of Wulff shapes should have normals in rational directions. 
\end{remark}

\begin{proposition} If $W$ is a rational zonoid of order $N$, then it is the limit in the Hausdorff distance of rational zonotopes of order $N$.
\end{proposition}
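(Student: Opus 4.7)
The plan is to realize $W$ as the Hausdorff limit of finite-range truncations at the level of the generating Ising system. By Definition~\ref{N-zo}, write $W=W_\varphi$, where $\varphi$ is the $\Gamma$-limit energy density of some $N$-periodic Ising system $\{a_{ij}\}$ satisfying~\eqref{coerN}. For each integer $K\ge 1$ I would introduce the truncated system $a^K_{ij}=a_{ij}$ if $\|i-j\|\le K$ and $0$ otherwise. Since $\{a^K_{ij}\}$ is $N$-periodic and of finite range, Theorem~\ref{cryst} yields that the $\Gamma$-limit density $\varphi_K$ of the associated scaled energies $E^{(K)}_\e$ is the support function of a rational zonotope $W_K$ of order $N$. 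Since Hausdorff convergence of convex bodies is equivalent to uniform convergence of their support functions, it suffices to prove $\varphi_K\to\varphi$ uniformly on $S^{d-1}$.

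The inequality $\varphi_K\le\varphi$ is immediate from the pointwise bound $E^{(K)}_\e\le E_\e$ together with the $\Gamma$-liminf inequality and the integral representation of both $F$ and $F_K$ as anisotropic perimeters with densities depending only on the normal.

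For the reverse estimate, the key tool is the discrete slicing inequality
$$
\e^{d-1}\,\#\{i\in\mathbb Z^d: u_i\neq u_{i+k}\}\le C\|k\|\,\|Du_\e\|,\qquad k\in\mathbb Z^d\setminus\{0\},
$$
valid for every scaled spin function $u\colon\e\mathbb Z^d\to\{-1,1\}$: each mismatched pair forces at least one crossing of the jump set of $u_\e$ along the segment from $\e i$ to $\e(i+k)$, and the number of such crossings is controlled by $C\|k\|$ times the discrete perimeter. Combined with the $N$-periodicity of $a$ and setting $\bar a(k)=\max_{\bar i\in[0,N)^d\cap\mathbb Z^d}a_{\bar i,\bar i+k}$, one obtains the remainder bound
$$
E_\e(u)-E^{(K)}_\e(u)=\sum_{\|i-j\|>K}\!\!\e^{d-1}a_{ij}(u_i-u_j)^2\le C\delta_K\|Du_\e\|,
$$
with $\delta_K:=\sum_{\|k\|>K}\bar a(k)\|k\|\to 0$ as $K\to\infty$ by~\eqref{coerN}. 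Applying this on a recovery sequence $u^\e_K$ for $F_K(A)$ on a polyhedral set $A$ (for which $\|Du^\e_K\|$ is uniformly bounded by a multiple of $\mathcal H^{d-1}(\partial A)$), we obtain
$$
F(A)\le\liminf_{\e\to 0}E_\e(u^\e_K)\le F_K(A)+C\delta_K\,\mathcal H^{d-1}(\partial A).
$$
Specializing $A$ to slabs of the form $\{0\le\langle x,\nu\rangle\le 1\}\cap Q_L$ and letting the transverse size $L\to\infty$ (a standard localization in which the two large faces with normal $\pm\nu$ dominate the side contribution) transfers this into the pointwise estimate $\varphi(\nu)-\varphi_K(\nu)\le C\delta_K$ for every $\nu\in S^{d-1}$.

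Letting $K\to\infty$ then yields the uniform convergence $\varphi_K\to\varphi$ on $S^{d-1}$, hence $W_K\to W$ in the Hausdorff metric. The main obstacle is the discrete slicing estimate, a $BV$-type counting inequality at the lattice level valid without any coercivity assumption on the truncated system; its per-direction version is essentially the computation carried out for a single $k$-interaction in the proof of Theorem~\ref{teo-hom-sys}.
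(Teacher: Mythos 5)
Your overall route is the same as the paper's: the paper's proof consists precisely in truncating the coefficients, $a^n_{ij}=a_{ij}$ for $\|i-j\|\le n$ and $0$ otherwise, and invoking Theorem~\ref{cryst}, leaving the convergence of the truncated densities implicit. The parts of your argument that flesh this out are sound: the monotone comparison $\varphi_K\le\varphi$, the lattice slicing inequality and the resulting tail bound $E_\varepsilon(u)-E^{(K)}_\varepsilon(u)\le C\delta_K\|Du_\varepsilon\|$ with $\delta_K=\sum_{\|k\|>K}\bar a(k)\|k\|\to0$ (which indeed follows from \eqref{coerN} and $N$-periodicity, since $\sum_k\bar a(k)\|k\|\le N^d\max_i\sum_j a_{ij}\|i-j\|$), and the equivalence of uniform convergence of support functions with Hausdorff convergence.

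The step that is not justified is the parenthetical claim that a recovery sequence $u^\varepsilon_K$ for $F_K(A)$ can be taken with $\limsup_\varepsilon\|Du_\varepsilon\|$ bounded by a multiple of $\mathcal H^{d-1}(\partial A)$ \emph{independent of $K$}. The convergence $u^\varepsilon\to A$ only requires \emph{some} finite bound on $\|Du_\varepsilon\|$, and the Proposition assumes only \eqref{coerN}, not coercivity; the truncated $N$-periodic system may be degenerate (e.g.\ whole residue classes of $\mathbb Z^d$ may carry no interaction of range $\le K$), so a recovery sequence can be polluted with zero-energy spin flips that leave the truncated energy and the $L^1_{\rm loc}$ limit unchanged but push $\limsup_\varepsilon\|Du_\varepsilon\|$ up to, say, $\delta_K^{-1}$; then $\delta_K\limsup_\varepsilon\|Du_\varepsilon\|$ does not vanish and your pointwise estimate $\varphi-\varphi_K\le C\delta_K$ is lost. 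Note also that for $N>1$ you cannot fall back on the trivial restriction of $2\chi_A-1$, since for genuinely periodic systems it need not be a recovery sequence for $F_K$. If the full system is coercive the gap is repaired by your own slicing estimate: from $\|Du\|\le ME(u)\le ME^{(K)}(u)+MC\delta_K\|Du\|$ one gets that every truncation with $MC\delta_K\le\tfrac12$ is coercive with the uniform constant $2M$, so $\limsup_\varepsilon\|Du_\varepsilon\|\le 2MF_K(A)\le 2MF(A)$, which is $K$-independent and closes the argument. In the general non-coercive case you still need an extra ingredient — for instance a selection of recovery sequences free of zero-energy oscillations via a sublattice decomposition in the spirit of Remark~\ref{co-no}, or a proof of $\varphi_K\to\varphi$ through the cell formula with controlled competitors — before the conclusion follows.
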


\begin{proof}
If $W$ is a rational zonoid of order $N$ generated by $\{a_{ij}\}$ it suffices to consider the rational zonotopes $W_n$ of order $N$ generated by $\{a^n_{ij}\}$, where $a^n_{ij}=a_{ij}$ if $\|i-j\|\le n$ and $a^n_{ij}=0$ if $\|i-j\|> n$.
\end{proof}

We finally show that the union of all zonoids of order $N$ is dense in the class of all symmetric convex sets.

\begin{theorem}[density of zonotopes of order $N$ as $N\to+\infty$]\label{dens-N}
For every convex bounded open set $W$ symmetric with respect to the origin there exist zonotopes $W_k$ of order $N_k$ converging to $W$.
\end{theorem}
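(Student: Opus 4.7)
The plan is to combine a geometric approximation of $W$ by symmetric polytopes with rational facet normals with the inverse homogenization result of Braides--Kreutz \cite{BK}, which is precisely what is needed to realize each such polytope as a rational zonotope of some order $N$.

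First I would produce a sequence of centered convex polytopes $P_k$, all with rational facet normals, such that $P_k\to W$ in the Hausdorff metric. A concrete construction is to fix for each $k$ a symmetric subset $\{\pm\nu_1^k,\ldots,\pm\nu_{m_k}^k\}\subset S^{d-1}$ of rational directions that is $1/k$-dense in $S^{d-1}$, and to set
\[
P_k=\{x\in\mathbb R^d\colon \langle x,\nu_i^k\rangle\le h_W(\nu_i^k)\text{ for all }i=1,\ldots,m_k\},
\]
where $h_W$ denotes the support function of $W$. Then each $P_k\supset W$ is centered and convex, it is a polytope because it is the intersection of finitely many half-spaces, and the uniform continuity of $h_W$ on $S^{d-1}$ together with the boundedness of $W$ implies $P_k\to W$ in Hausdorff distance as $k\to\infty$.

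Next I would invoke the inverse crystallinity result of Braides--Kreutz \cite{BK}, cited in the introduction of this paper, which asserts that every centered convex polytope with rational facet normals is the Wulff shape of the $\Gamma$-limit, in the sense of Theorem \ref{cryst}, of some $N$-periodic finite-range Ising system, for a period $N$ depending on the polytope. Applied to each $P_k$ this yields an $N_k$-periodic finite-range Ising system whose effective surface tension $\varphi_k$ has Wulff shape $P_k$. By Definition \ref{N-zo}, $P_k$ is therefore a rational zonotope of order $N_k$, and the choice $W_k:=P_k$ provides the desired approximating sequence, with the orders $N_k$ generically tending to $+\infty$.

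The only nontrivial ingredient is the appeal to \cite{BK}: constructing a periodic finite-range Ising system whose effective surface tension has a prescribed symmetric polytopal Wulff shape is exactly the content of that paper, and I would rely on it as a black box rather than attempt to reprove it. The polyhedral outer-approximation of the symmetric convex body $W$ in the first step is entirely classical and uses only the fact that rational directions are dense in $S^{d-1}$, so no new analytic input is needed beyond what is already available in \cite{BK}.
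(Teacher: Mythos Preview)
Your argument hinges on a version of \cite{BK} that is stronger than what the paper actually uses, and this is where the gap lies. You claim that \cite{BK} provides an \emph{exact reachability} result: every centered polytope with rational facet normals arises as the Wulff shape of some $N$-periodic finite-range Ising system. But the result from \cite{BK} that the paper invokes (and quotes explicitly in its own proof) is an \emph{approximation} statement: for any support function $\varphi$ with $\alpha\|z\|_1\le\varphi(z)\le\beta\|z\|_1$, there exist $N_k$-periodic nearest-neighbour systems with coefficients in $\{\alpha,\beta\}$ whose homogenized densities $\varphi_k$ converge to $\varphi$. This does not give you $\varphi_k=\varphi_{P_k}$ exactly for your polytopes $P_k$; it only gives a sequence approaching $\varphi_{P_k}$, so your identification $W_k:=P_k$ is not justified by the cited black box.

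The paper's proof is both simpler and avoids this issue: it skips the polytope approximation entirely and applies the approximation result of \cite{BK} directly to the support function $\varphi$ of $W$ itself. This yields periodic finite-range systems whose homogenized densities $\varphi_k$ converge to $\varphi$; by Theorem~\ref{cryst} each corresponding Wulff shape $W_k$ is a polytope, hence a rational zonotope of order $N_k$, and $W_k\to W$ follows from $\varphi_k\to\varphi$. Your intermediate approximation by rational-normal polytopes is thus unnecessary, and the exact-reachability step you need is not available from \cite{BK} as described here.
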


\begin{proof} Let $\varphi$ be the support function of $W$.
In \cite{BK} the following result is proved: if $0<\alpha\le\beta$ are such that
$$\alpha\|z\|_1\le \varphi(z)\le\beta\|z\|_1,$$
then there exist periodic systems $\{a^k_{ij}\}$ of period $N_k$ with $a^k_{ij}\in\{\alpha,\beta\}$ such that the related homogenized energy densities $\varphi_k$ converge to $\varphi$. By Theorem \ref{cryst} the Wulff shapes $W_k$ are zonotopes of order $N_k$ that converge to $W$.
\end{proof}

\subsection{Further possible generalizations}
While homogeneous directed Ising systems as in Section \ref{directed} only involve a translation in the resulting generated rational zonoid, the class of homogenized periodic directed Ising systems could be strictly larger than that of translations of the non-directed analog. We can therefore consider periodic Ising systems with coefficients $a_{ij}$ satisfying \eqref{per-1}, \eqref{per-2}, and \eqref{coerN}, and the corresponding energies 
\begin{equation}\label{fegen-dirN}
E_\e(u)=\sum_{i,j\in\mathbb Z^d}\e^{d-1} \,a_{ij}((u_i-u_j)^+)^2.
\end{equation}
Note again that we do not suppose that $a_{ij}=a_{ji}$, a condition that would not be restrictive for un-directed Ising systems. The results in \cite{CK} ensure the validity of the claims of Theorem {\rm\ref{cryst}}.
We can therefore give definitions of directed rational zonotope and zonoid of order $N$ as in Definition \ref{N-zo}. It would be interesting to know if such zonoids still possess a center of symmetry, in which case it is likely that a result as Theorem \ref{dens-N} holds for all $W$ with a center of symmetry. 

\smallskip
Finally, we note that another possible class of perimeter functionals are those generated by perturbed periodic Ising systems of the form 
\begin{equation}
E_\e(u)=\sum_{i,j\in\mathbb Z^d}\e^{d-1} \alpha_{i-j} (u_i-u_j)^2+\sum_{i\in\mathbb Z^d}\e^d u_ig_i,
\end{equation}
where $g$ is a periodic function with zero average,
and small enough so that $E_\e(u)$ remains non-negative on bounded
configurations. This corresponds to adding a volume term with zero average.
These energies still converge to a perimeter functional, whose form may depend on the perturbation. 
A link with the homogenization of directed Ising systems can be obtained following the results in \cite{CK} as done for the continuous analog in \cite[Sec.~4]{CT}.

\bigskip

\noindent{\bf Acknowledgements.}
 This paper is based on work supported by the National Research Project (PRIN  2017BTM7SN) 
``Variational Methods for Stationary and Evolution Problems with Singularities and 
 Interfaces", funded by the Italian Ministry of University and Research. 
Andrea Braides is a member of GNAMPA, INdAM.

\end{document}